\renewcommand{\phi}{\varphi}
\renewcommand{\kappa}{\varkappa}
\renewcommand{\epsilon}{\varepsilon}
\newtheorem{lemma}{Lemma}
\newtheorem{corollary}{Corollary}
\newtheorem{theorem}{Theorem}
\newtheorem{axiom}{Axiom}
\newcommand\ar[3]
\newcommand\artxt[4]
\DeclareMathOperator{\Ker}{Ker}
\DeclareMathOperator{\im}{im}
\DeclareMathOperator{\Img}{Im}
\DeclareMathOperator{\coker}{coker}
\DeclareMathOperator{\Coker}{Coker}
\DeclareMathOperator{\coim}{coim}
\DeclareMathOperator{\Coim}{Coim}
\DeclareFontFamily{U}{BOONDOX-cal}{\skewchar\font=45 }
\DeclareFontShape{U}{BOONDOX-cal}{m}{n}{
  <-> s*[1.05] BOONDOX-r-cal}{}
\DeclareFontShape{U}{BOONDOX-cal}{b}{n}{
  <-> s*[1.05] BOONDOX-b-cal}{}
\DeclareMathAlphabet{\mathcal}{U}{BOONDOX-cal}{m}{n}
\SetMathAlphabet{\mathcal}{bold}{U}{BOONDOX-cal}{b}{n}
\DeclareMathAlphabet{\mathbcalb}{U}{BOONDOX-cal}{b}{n}
\title[Some Properties of Homology and Exactness of Nomura's Homology Sequences]{Some Properties of Homology and Exactness of Nomura's Homology Sequences in a Grandis Homological Category}
\thanks{ }
\author{Yaroslav Kopylov}
\address{Yaroslav Kopylov\,
\orcidlink{0000-0002-0343-4424}\,
0000-0002-0343-4424
\newline\hphantom{iii} Sobolev Institute of Mathematics
\newline\hphantom{iii} 4 Koptyug Ave.
\newline\hphantom{iii} 630090, Novosibirsk, Russia}
\email{\href{mailto:yakop@math.nsc.ru}{yakop@math.nsc.ru}}
\author{Vadim Leshkov}
\address{Vadim Leshkov\,
\orcidlink{0009-0000-2768-9942}\,
0009-0000-2768-9942
\newline\hphantom{iii} University of Virginia
\newline\hphantom{iii} P. O. Box 400137
\newline\hphantom{iii} Department of Mathematics
\newline\hphantom{iii} 127 Kerchof Hall
\newline\hphantom{iii} Charlotsville, VA 22904, USA}
\email{\href{mailto:frj5jd@virginia.edu}{frj5jd@virginia.edu}}
\begin{document}

\begin{abstract}
We consider Lambek's invariants $\Ker$ and $\Img$ for commutative squares in Grandis homological categories. We prove that Nomura's null sequences exist in such categories and find sufficienct conditions for their exactness. We also prove the coincidence of left and right homology in  Grandis homological categories.

\vspace{2mm}
\noindent
\textbf{Key words and phrases:} commutative square, Grandis homological category, homology, Nomura's null sequences, exactness

\vspace{2mm}
\noindent
\textbf{Mathematics Subject Classification 2020:} 18G50
\end{abstract}

\maketitle 

\section{Introduction}\label{sec_intro}

In 1964, in~\cite{Lambek-1964}, Lambek proved the~following assertion:
\smallskip

{\it Given a~commutative diagram}
\begin{equation}\label{maind}
\begin{tikzcd}
    A \ar[r,"f"] \ar[d,"a"'] \ar[r, phantom, shift right=4ex, "S" marking] & B \ar[r,"g"] \ar[d,"b" description] \ar[r, phantom, shift right=4ex, "T" marking] & C \ar[d,"c"]\\
    A' \ar[r,"f'"'] & B' \ar[r,"g'"'] & C'
\end{tikzcd}
\end{equation}
{\it of groups and group homomorphisms with exact rows, there is a~natural
isomorphism
$$
(\Img b \cap \Img f')/\Img(bf)
\cong
\Ker(cg)/(\Ker b\cdot \Ker g).
$$ }

For a~commutative
square
\begin{equation}
\label{comm_square}
\begin{tikzcd}
    A \ar[r,"f"] \ar[d,"a"'] \ar[r, phantom, shift right=4ex, "S" marking] & B \ar[d,"b"] \\
    C \ar[r,"g"'] & D
\end{tikzcd}
\end{equation}
in the~category of groups, we put:
$$
\Img S:=(\Img b\cap \Img g)/\Img(bf), \quad
\Ker S:=\Ker(bf)/(\Ker a \cdot \Ker f).
$$

Later Leicht~\protect\cite{Le} extended Lambek's theorem to a~wider class of~categories that includes the~category of~groups~$\mathcal{G\!r\!p}$, thus giving a~homological proof.
In~\protect\cite{No1}, Nomura proved Lambek's isomorphism in~a~Puppe exact category.
In~\cite{Ko05_2}, these invariants were considered in~a~quasi-abelian category, and Lambek's isomorphism was established provided that the~morphism~$b$ is exact. In~\cite{KoLe2024}, a~commutative diagram of two commutative squares~\eqref{maind} was considered in a semiexact category in the sense of M.~Grandis and it was shown that there exists a~natural morphism $\Lambda_{S,T} : \Img S \to \Ker T$ (Lambek's morphism) provided that the~morphism~$b$ is exact and rows are null-sequences which are not necessarily exact.

In~\cite{No1}, Y.~Nomura constructed two exact sequences in a~Puppe exact category corresponding to diagram~\eqref{maind}. The~first sequence looks as
\begin{equation}\label{diagram_nomura_exact_seq}
\begin{tikzcd}[column sep=5pt]
    & 0 \ar[r]  \ar[d, phantom, ""{coordinate, name=Z}] & H\left( \Ker(bf) \to \Ker b \to \Ker c \right) \ar[r] & \Ker\left( H \to H' \right) \ar[r] & \Img S 
    \ar[
        dlll,
        "\Lambda"',
        rounded corners,
        to path={ -- ([xshift=2ex]\tikztostart.east)
        |- (Z) [near end]\tikztonodes
        -| ([xshift=-2ex]\tikztotarget.west) 
        -- (\tikztotarget)}
    ]\\
   & \Ker T \ar[r] & \Coker\left( H \to H' \right) \ar[r]  & H\left(\Coker a \to \Coker b \to \Coker(g' b) \right) \ar[r]  & 0
\end{tikzcd}
\end{equation}  
where $\Lambda = \Lambda_{S,T} : \Img S \to \Ker T$ is the~Lambek morphism, $H$ is the homology of the first row, and $H'$ is the homology of the second row.

The~second sequence is 
\begin{equation}\label{seq-2}
\begin{tikzcd}[column sep=small]
    \ar[d, phantom, ""{coordinate, name=Z}] \Ker S \ar[r,"p_1"] & H(\Ker a\to \Ker b\to \Ker c) 
    \ar[r,"\varkappa"] & \Ker\left( H \to H' \right) \ar[r, "\beta"] & \Img S 
    \ar[
        dlll,
        "\Lambda"',
        rounded corners,
        to path={ -- ([xshift=2ex]\tikztostart.east)
        |- (Z) [near end]\tikztonodes
        -| ([xshift=-2ex]\tikztotarget.west) 
        -- (\tikztotarget)}
    ]
    \\
    \Ker T \ar[r,"\beta'"'] & \Coker\left( H \to H' \right) \ar[r,"\varkappa'"'] & H( \Coker a\to \Coker b\to \Coker c) \ar[r,"p_1'"']
    & \Img T
\end{tikzcd}
\end{equation}

In~the~present article, we prove the existence of sequences~\eqref{diagram_nomura_exact_seq} and \eqref{seq-2} and obtain criteria for their exactness in~the~framework of~Grandis homological categories (see~\cite{Gr92,MGrandis-2013}).
These categories were introduced as a~tool for~developing
homological algebra ``in~a~strongly non-abelian setting.'' In~recent years, Grandis homological categories have found applications in~homological algebra and the~theory of approximate representations of groups~\cite{ConnCons2019,Fr2023}.

The~structure of the~article is as follows.
In~Section~\ref{sec_hom_th_hom_cat}, we recall the~basic definitions concerning semiexact (ex1-categories), ex2-, and homological categories and prove that an ex2-category is homological if and only if the~natural morphism from the~left homology object to the~right homology object is an~isomorphism
(a~generalization of~\cite[Theorem~3]{Ko_semr12}, which is a~criterion for a~Raikov semi-abelian category to be quasi-abelian). In~Section~\ref{sec-lamb}, we recall the~construction of Lambek's morphism~$\Lambda$ and some related facts. Section~\ref{nomura_null} is devoted to constructing and investigating Nomura's null sequences~\eqref{diagram_nomura_exact_seq} and \eqref{seq-2} in a~Grandis homological category; some related assertions are also proved.

\section{Homology in a Homological Category}\label{sec_hom_th_hom_cat}

\subsection{Semiexact Categories}

We recall the~notion of an ideal in a~category and the~notions of kernel and cokernel with respect to an ideal (see \cite[1.3.1]{MGrandis-2013}).

Let $\mathcal{C}$ be a category.
Consider a class of morphisms $\mathcal{N} \subseteq {\rm Mor}(\mathcal{C})$ in $\mathcal{C}$.
The~class $\mathcal{N}$ is called an \emph{ideal} in $\mathcal{C}$ if for every morphism $f \in \mathcal{N}$ any legitimate composite $hfg$ belongs to $\mathcal{N}$.
A pair $(\mathcal{C}, \mathcal{N})$ consisting of a category $\mathcal{C}$ equipped with an ideal $\mathcal{N}$ is called a \emph{category with null morphisms} and morphisms from $\mathcal{N}$ are called \emph{null morphisms}.
We denote the~fact that a morphism $f: A \to B$ is null by the~following label over the~arrow:
$$
\artxt{f}{A}{B}{{\rm null}}
\ \overset{{\rm def}}{\Longleftrightarrow}\
f \in \mathcal{N}.$$
An object $A \in {\rm Ob}(\mathcal{C})$ is called a \emph{null object} if ${\rm id}_A \in \mathcal{N}$.
An ideal $\mathcal{N}$ is said to be \emph{closed} if there exists a class $\mathcal{O} \subseteq {\rm Ob}(\mathcal{C})$ such that every morphism from $\mathcal{N}$ factors through some object in $\mathcal{O}$.

In a category with null morphisms $(\mathcal{C}, \mathcal{N})$, a~morphism
$k: K \to A$ is called a \emph{kernel} of a morphism $f: A \to B$ if $fk\in \mathcal{N}$ and for any morphism $x$ with $fx\in \mathcal{N}$ there exists a~unique morphism $x'$ such that $x=kx'$. Dually, a~morphism
$l: B \to L$ is called a \emph{cokernel} of a morphism $f: A \to B$ if $lf\in \mathcal{N}$ and for any morphism $y$ with $yf\in \mathcal{N}$ there exists a~unique morphism $y'$ such that $y=y'l$. Clearly, two kernels (cokernels) of any morphism are isomorphic. If $k: K \to A$ is a~kernel of $f: A \to B$ then we write $K = \Ker f$ and $k = \ker f$; if $l: L \to B$ is a~cokernel of~$f$ then we write $L=\Coker f$ and
$l=\coker f$. All we have said is schematically depicted in~the~diagram
\[
\begin{tikzcd}
    \Ker f \ar[r,"\ker f"] \ar[rr,bend left=45,"{\rm null}"] & A \ar[r,"f"] & B & & A \ar[r,"f"] \ar[rr,"{\rm null}", bend left=45] \ar[dr,"{\rm null}"',bend right=30] & B \ar[r,"\coker f"] \ar[d,"y"'] & \Coker f \ar[dl,dashed,bend left=30,"y'"] 
  \\
   & \bullet \ar[ul,dashed,bend left=30,"x'"] \ar[u,"x"] \ar[ur,"{\rm null}"',bend right=30] & & & & \bullet
\end{tikzcd}
\]
    
The~\emph{image} and \emph{coimage} of a morphism $f: A \to B$ are defined as 
$$\ar{\im f := \ker(\coker f)}{\Img f}{B}$$
and
$$\ar{\coim f := \coker(\ker f)}{A}{\Coim f},$$
respectively.
We say that a~morphism $f$ is an~$\mathcal{N}$\emph{-monomorphism} (respectively, $\mathcal{N}$\emph{-epimorphism}) if it satisfies the~left-hand condition (respectively, the~right-hand condition):
$$
\text{if}\ fh\ \text{is null then}\ h\ \text{is null};
\quad\quad
\text{if}\ kf\ \text{is null then}\ k\ \text{is null}.
$$
It is clear that $\ker f$ is an~$\mathcal{N}$-monomorphism and $\coker f$ is an $\mathcal{N}$-epimorphism.
The~arrows $\rightarrowtail$ and $\twoheadrightarrow$ are reserved for kernels and cokernels respectively.
Note that if $(f: X \to Y) \in \mathcal{N}$ then $\ker f = {\rm id}_X$ and $\coker f = {\rm id}_Y$.
We make use the~following important lemma (see \cite[1.3.1]{MGrandis-2013}):

\begin{lemma}\label{lemma_n_mono_ker_null_and_dual}
    The~following are equivalent:
    \begin{itemize}
        \item[{\rm (i)}]
        $f$ is an $\mathcal{N}$-monomorphism;
        \item[{\rm (ii)}]
        $\ker f \in \mathcal{N}$;
        \item[{\rm (iii)}]
        $\ker(fh) = \ker h$ for any $h$.
    \end{itemize}
    Dually, the~following are equivalent:
    \begin{itemize}
        \item[{\rm (i)}]
        $f$ is an $\mathcal{N}$-epimorphism;
        \item[{\rm (ii)}]
        $\coker f \in \mathcal{N}$;
        \item[{\rm (iii)}]
        $\coker(kf) = \coker k$ for any $k$.
    \end{itemize}
\end{lemma}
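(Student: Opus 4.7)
The plan is to prove the first three-way equivalence by establishing the cycle (i) $\Rightarrow$ (ii) $\Rightarrow$ (iii) $\Rightarrow$ (i); the dual statement then follows by formal duality, since reversing arrows in $\mathcal{C}$ interchanges kernels with cokernels and $\mathcal{N}$-monomorphisms with $\mathcal{N}$-epimorphisms while preserving the ideal structure of $\mathcal{N}$.

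Two of the three implications should be essentially immediate from the definitions. For (i) $\Rightarrow$ (ii), the composition $f \cdot \ker f$ is null by the very definition of kernel; applying the $\mathcal{N}$-monomorphism condition with $h = \ker f$ then forces $\ker f \in \mathcal{N}$. For (iii) $\Rightarrow$ (i), suppose $fh$ is null; by the remark recorded just before the lemma (that $\ker g = {\rm id}$ whenever $g \in \mathcal{N}$) we have $\ker(fh) = {\rm id}$, so (iii) gives $\ker h = {\rm id}$, and therefore $h = h \cdot \ker h$ is null because $h \cdot \ker h$ is null for any $h$.

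The main step is (ii) $\Rightarrow$ (iii). I would check that $\ker h$ satisfies the universal property that characterizes $\ker(fh)$. The composite $(fh) \cdot \ker h = f \cdot (h \cdot \ker h)$ lies in $\mathcal{N}$ because $h \cdot \ker h$ does and $\mathcal{N}$ is an ideal. For the universal condition, suppose $(fh) \cdot x = f \cdot (hx)$ is null; then $hx$ factors through $\ker f$ by the universal property of the latter, and since $\ker f \in \mathcal{N}$ by (ii), the ideal property yields $hx \in \mathcal{N}$, so $x$ factors uniquely through $\ker h$. Hence $\ker h$ is indeed a kernel of $fh$, giving $\ker(fh) = \ker h$. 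I do not anticipate any serious obstacle here; the only care needed is the consistent use of the ideal property of $\mathcal{N}$ to transmit nullity across compositions on either side, and the fact that kernels are $\mathcal{N}$-monomorphisms (and in particular monomorphisms in the ordinary sense) to secure uniqueness of the factorizations.
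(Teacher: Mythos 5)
Your proof is correct; note that the paper itself gives no proof of this lemma, quoting it from Grandis \cite{MGrandis-2013}, 1.3.1. Your argument --- the cycle (i) $\Rightarrow$ (ii) $\Rightarrow$ (iii) $\Rightarrow$ (i), with the key step (ii) $\Rightarrow$ (iii) done by checking that $\ker h$ satisfies the universal property of $\ker(fh)$ via the ideal property of $\mathcal{N}$ --- is the standard one (and the uniqueness of the factorization you worry about at the end is already built into the paper's definition of kernel, so no separate appeal to monomorphy is needed).
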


Let $(\mathcal{C}, \mathcal{N})$ be a category with null morphisms.
Consider the~following two axioms~\cite{MGrandis-2013}.

\begin{axiom}\label{axiom_ex0}
    The~ideal $\mathcal{N}$ is closed.
\end{axiom}

\begin{axiom}\label{axiom_ex1}
    Every morphism has a~kernel and a~cokernel with respect to $\mathcal{N}$.
\end{axiom}

A category with null morphisms is called \emph{semiexact} if it satisfies Axioms \ref{axiom_ex0} and \ref{axiom_ex1}.
Semiexact categories are also called \emph{ex1-categories}.

It is easy to see that for any $\mathcal{N}$-monomorphism (respectively,
any $\mathcal{N}$-epimorphism) $f$ in a~semi-exact category,  $\Ker f$ (respectively, $\Coker f$) is a~null object.

\begin{lemma}
In a semiexact category $(\mathcal{C}, \mathcal{N})$,
if $\alpha\colon A \twoheadrightarrow B$ is a~cokernel,
i.e. $\alpha = \coker \alpha'$
for a~morphism $\alpha'\colon \bullet \to A$,
then for every $g\colon A \to A'$ there exists
a~morphism $g'\colon B \to \Coker(g \alpha')$
such that the~left-hand square is a~pushout:
\[
\begin{tikzcd}
    \bullet \ar[r,"\alpha'"] & A \ar[r,"\alpha\, = \coker \alpha'",two heads] \ar[d,"g"'] \ar[r, phantom, shift right=4ex, "{\rm PO}" marking] & B \ar[d,"g'",dashed]\\
    & A' \ar[r, "\coker(g \alpha')"',two heads] & \Coker(g \alpha')
\end{tikzcd}
\quad\quad
\quad\quad
\begin{tikzcd}
    \Ker(\beta' f) \ar[r,"\ker (\beta' f)",tail] \ar[d,dashed,"f'"'] & D' \ar[d,"f"]\\
    C \ar[r, phantom, shift left=4ex, "{\rm PB}" marking] \ar[r,"\beta\, = \ker \beta'"',tail] & D \ar[r,"\beta'"'] & \bullet
\end{tikzcd}
\]
Dually, if $\beta\colon C \rightarrowtail D$ is a~kernel,
i.e. $\beta = \ker \beta'$ with $\beta'\colon D \to \bullet$,
then for every $f\colon D' \to D$ there exists
a~morphism $f'\colon \Ker(\beta' f) \to C$
such that the~right-hand square is a~pullback.
\end{lemma}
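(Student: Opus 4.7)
The plan is to build the morphism $g'$ from the universal property of $\alpha = \coker\alpha'$, then verify the pushout property from the universal property of $\coker(g\alpha')$; the pullback half of the statement is entirely dual.

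First I would produce $g'$. Because $\alpha\alpha'\in\mathcal{N}$ and $\mathcal{N}$ is an ideal, the composite $\coker(g\alpha')\cdot g\cdot \alpha' = \coker(g\alpha')\cdot(g\alpha')$ is null. Applying the universal property of $\alpha = \coker\alpha'$ to the morphism $\coker(g\alpha')\cdot g\colon A\to \Coker(g\alpha')$ yields a unique $g'\colon B\to \Coker(g\alpha')$ with $g'\alpha = \coker(g\alpha')\cdot g$, so the required square commutes.

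Next I would verify the pushout property. Given $u\colon B\to X$ and $v\colon A'\to X$ with $u\alpha = vg$, the identity $vg\alpha' = u\alpha\alpha'$ together with $\alpha\alpha'\in\mathcal{N}$ and the ideal property of $\mathcal{N}$ shows $vg\alpha'\in\mathcal{N}$, so the universal property of $\coker(g\alpha')$ delivers a unique $w\colon \Coker(g\alpha')\to X$ with $w\cdot\coker(g\alpha') = v$. To check $wg' = u$ I would compute $wg'\alpha = w\cdot\coker(g\alpha')\cdot g = vg = u\alpha$ and then invoke the fact that every cokernel is an epimorphism, which is an immediate consequence of the uniqueness clause in the universal property of $\alpha$ (apply it to the common composite $wg'\alpha = u\alpha$, which satisfies the null condition upon precomposition with $\alpha'$). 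Uniqueness of $w$ is already part of the cokernel universal property.

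The dual pullback statement is symmetric: since $\beta'\cdot f\cdot\ker(\beta'f) = (\beta'f)\cdot\ker(\beta'f)\in\mathcal{N}$, the universal property of $\beta = \ker\beta'$ factors $f\cdot\ker(\beta'f)$ uniquely through $\beta$ to produce $f'\colon\Ker(\beta'f)\to C$; the pullback property is then verified by the universal property of $\ker(\beta'f)$ together with the fact that kernels are monomorphisms. I do not anticipate a real obstacle: beyond the existence of kernels and cokernels (Axiom~\ref{axiom_ex1}) and the ideal property of $\mathcal{N}$, the argument is purely a bookkeeping exercise with universal properties, and in particular the closedness condition (Axiom~\ref{axiom_ex0}) is not used.
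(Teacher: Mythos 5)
Your proof is correct. The paper states this lemma without any proof (it is treated as a routine consequence of the universal properties, essentially \cite[1.3.4]{MGrandis-2013}), so there is no argument to compare against; your verification is exactly the expected one: construct $g'$ from the universal property of $\alpha=\coker\alpha'$, then obtain the mediating morphism $w$ from the universal property of $\coker(g\alpha')$, and close the diagram using the fact that cokernels are epimorphisms (which, as you note, follows from the uniqueness clause). One small misattribution: the nullity of $\coker(g\alpha')\cdot g\cdot\alpha'=\coker(g\alpha')\cdot(g\alpha')$ is immediate from the definition of $\coker(g\alpha')$, not a consequence of $\alpha\alpha'\in\mathcal{N}$ and the ideal property; that reasoning is what you correctly use later for $vg\alpha'=u\alpha\alpha'\in\mathcal{N}$. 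Your observation that only Axiom~\ref{axiom_ex1} and the ideal property are needed, and not the closedness Axiom~\ref{axiom_ex0}, is also accurate.
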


Consider a morphism $f: A \to B$ in a semiexact category $(\mathcal{C}, \mathcal{N})$.
Note that $(\coker f) f$ is null.
Hence, there exists a unique morphism $f': A \to \Img\, f$ such that $f = (\im f) f'$.
Note that $f' \ker f$ is null.
Hence, $f'$ factors through the~coimage of $f$, i.e. there exists a morphism $\overline{f}: \Coim f \to \Img f$.
\[
\begin{tikzcd}
    \Ker f \ar[r,"\ker f",tail] & A \ar[r,"f"] \ar[d,"\coim f"',two heads] \ar[rd,"f'"',dashed,bend left=25] & B \ar[r,"\coker f",two heads] & \Coker f\\
    & \Coim f \ar[r,dashed,"\overline{f}"'] & \Img f \ar[u,"\im f"',tail]
\end{tikzcd}
\]
Therefore, any morphism $f$ in a semiexact category can be factored as 
\begin{equation}\label{normal-fact}
f = (\im f)\, \overline{f} \coim f. 
\end{equation}
We refer to~\eqref{normal-fact} as a~normal (or canonical) decomposition
of~$f$.
Two normal decompositions of~$f$ are canonically isomorphic.
The~morphism~$\overline{f}$ in~\eqref{normal-fact}
is defined uniquely once $\im f$ and $\coim f$ are chosen.
If $\overline{f}$ is an isomorphism then morphism $f$ is called \emph{exact}.
Usually, when morphism $f$ is exact, we omit the~middle isomorphism and write $f = (\im f) \coim f$.

Consider a commutative square (\ref{comm_square}) in a semiexact category.
Note that
$\beta g \ker \alpha = f \alpha \ker \alpha \in \mathcal{N}$,
and so there exists a unique morphism $\widehat{g}\colon \Ker \alpha \to \Ker \beta$ 
such that $(\ker\beta)\widehat{g}=g\ker\alpha$, which we call \emph{the~morphism~of~kernels~induced~by~$g$}. 
Dually, there exists a unique morphism of cokernels 
$\widecheck{f}:\Coker\alpha \to \Coker\beta$ such that
$\widecheck{f}\coker\alpha= (\coker \beta)f$, which we call 
\emph{the~morphism~of~cokernels~induced~by~$f$}.
$$
\begin{tikzcd}
\Ker \alpha \ar[r,"\ker\alpha",tail] \ar[d,dashed,"\widehat{g}"'] & A 
\ar[r,"\alpha"] \ar[d,"g"'] & B \ar[r,"\coker \alpha",two heads] \ar[d,"f"] & \Coker \alpha 
\ar[d,"\widecheck{f}",dashed] \\
\Ker \beta \ar[r,"\ker \beta"',tail] & C 
\ar[r,"\beta"'] & D \ar[r,"\coker \beta"',two heads] & \Coker \beta
\end{tikzcd}
$$
This notation is somewhat ambiguous since $f$ can be included in different commutative squares but in what follows, the~square under consideration always be clear from the~context.

It is easy to see that kernels and cokernels satisfy the~isomorphism identities described in the~following important lemma formulated in~\cite{No1} and easy to prove.

\begin{lemma}[$3 \times 3$ lemma]\label{lemma_three_by_three}
    For a~commutative square
    \[
    \begin{tikzcd}
        A \ar[d,"a"'] \ar[r,"f"] & B \ar[d,"b"] \\
        C \ar[r,"g"'] & D
    \end{tikzcd}
    \]
     in an~semiexact category, there exist unique isomorphisms:
    \begin{itemize}
        \item[{\rm (i)}]
        $
        \begin{tikzcd}
            \lambda : \Ker(\Ker a \xrightarrow[]{\widehat{f}} \Ker b) \ar[r,"\cong"] & \Ker(\Ker f \xrightarrow[]{\widehat{a}} \Ker g)
        \end{tikzcd}
        $,
        \item[{\rm (ii)}]
        $
        \begin{tikzcd}
            \mu : \Coker(\Coker a \xrightarrow[]{\widecheck{g}} \Coker b) \ar[r,"\cong"] & \Coker(\Coker f \xrightarrow[]{\widecheck{b}} \Coker g)
        \end{tikzcd}
        $
    \end{itemize}
    such that
    $(\ker a) \ker \widehat{f} = (\ker f) (\ker \widehat{a}) \lambda$
    and
    $\mu (\coker \widecheck{g}) \coker b = (\coker \widecheck{b}) \coker g$.
\end{lemma}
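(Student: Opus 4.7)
The plan is to establish (i) by exhibiting both kernels as solutions to a common universal problem over the object $A$, and then to deduce (ii) from (i) by dualization. The key claim is that each of the composites $(\ker a)(\ker \widehat{f}): K_1 \to A$ and $(\ker f)(\ker \widehat{a}): K_2 \to A$, where $K_1 := \Ker \widehat{f}$ and $K_2 := \Ker \widehat{a}$, is universal among morphisms $x: X \to A$ for which both $ax$ and $fx$ are null.

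First I would verify that $(\ker a)(\ker \widehat{f})$ has this property. The defining equation $(\ker b) \widehat{f} = f (\ker a)$, together with the ideal axiom, give at once that both $a$ and $f$ become null after composition with $(\ker a)(\ker \widehat{f})$. Conversely, given $x: X \to A$ with $ax$ and $fx$ null, the nullness of $ax$ produces a unique $x_1: X \to \Ker a$ with $x = (\ker a) x_1$; then $fx = (\ker b) \widehat{f} x_1$, and since $\ker b$ is an $\mathcal{N}$-monomorphism by Lemma~\ref{lemma_n_mono_ker_null_and_dual}, we conclude that $\widehat{f} x_1$ is null, yielding a unique $x_2: X \to K_1$ with $x_1 = (\ker \widehat{f}) x_2$. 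The symmetric argument, interchanging the roles of the horizontal and vertical pairs in the square, shows that $(\ker f)(\ker \widehat{a})$ satisfies the same universal property.

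The uniqueness of universals then provides the unique isomorphism $\lambda: K_1 \to K_2$ with $(\ker a)(\ker \widehat{f}) = (\ker f)(\ker \widehat{a}) \lambda$, which is precisely (i). Part (ii) would follow from (i) applied in the opposite category $\mathcal{C}^{\mathrm{op}}$, which is again semiexact with the opposite ideal; there kernels and cokernels swap roles, and $\widehat{(\cdot)}$ is replaced by $\widecheck{(\cdot)}$, delivering the isomorphism $\mu$ with the required identity. The only mildly delicate point is translating the nullness of $(\ker b) \widehat{f} x_1$ to the nullness of $\widehat{f} x_1$ via the $\mathcal{N}$-monomorphism property of $\ker b$; everything else is routine formal manipulation of universal properties together with the ideal axiom.
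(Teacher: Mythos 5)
Your argument is correct: both composites $(\ker a)(\ker\widehat{f})$ and $(\ker f)(\ker\widehat{a})$ do represent the same universal problem (morphisms $x$ into $A$ with $ax$ and $fx$ null), the passage from the nullness of $(\ker b)\widehat{f}x_1$ to that of $\widehat{f}x_1$ via the $\mathcal{N}$-monomorphism property of $\ker b$ is exactly the one nontrivial step, and (ii) follows by the self-duality of the semiexact axioms. The paper itself omits the proof, declaring the lemma ``easy to prove,'' and your universal-property argument is precisely the intended justification, so there is nothing to compare beyond noting that you have correctly filled in the omitted details.
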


The~following assertion is Lemma~2.3 in~\cite{No1}.  It is easy to see that it holds in~any semiexact category; we omit its straightforward proof.

\begin{lemma}\label{PB}
Let
\begin{equation}\label{l2.3}
\begin{tikzcd}
\bullet \arrow[r, "b"] \arrow[d, "a"'] & \bullet \arrow[d, "c"] \\
\bullet \arrow[r, "d"']                & \bullet                
\end{tikzcd}
\end{equation}
be a~commutative square and $c$ be an~$\mathcal{N}$-monomorphism. Let
$\widehat{a}$ be the~morphism of the~kernels of the~rows of~\eqref{l2.3},
i.e., the~morphism such that $a\ker\,b=(\ker\,d)\widehat{a}$. Then the~square
$$
\begin{tikzcd}
\bullet \arrow[r, "\ker\,b"] \arrow[d, "\widehat{a}"'] & \bullet \arrow[d, "a"] \\
\bullet \arrow[r, "\ker\,d"']                & \bullet                
\end{tikzcd}
$$
is a~pullback.
\end{lemma}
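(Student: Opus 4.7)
The plan is to verify the universal property of the pullback directly from the universal properties of kernels and from the $\mathcal{N}$-monomorphism hypothesis on~$c$. Label the corners of the original square so that $b\colon A\to B$, $a\colon A\to C$, $d\colon C\to D$, and $c\colon B\to D$, with $cb=da$. Take a test object $X$ together with morphisms $v\colon X\to A$ and $u\colon X\to\Ker d$ satisfying the compatibility $av=(\ker d)u$; I need to produce a unique $w\colon X\to\Ker b$ with $(\ker b)w=v$ and $\widehat{a}w=u$.

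The first step is to show that $bv$ is null, which is where the hypothesis on $c$ is used. Compute
\[
c(bv)=(cb)v=(da)v=d(av)=d(\ker d)u,
\]
and $d(\ker d)$ is null by definition of the kernel, so $c(bv)$ is null. Since $c$ is an $\mathcal{N}$-monomorphism, this forces $bv$ itself to be null. By the universal property of $\ker b$, there is then a unique $w\colon X\to\Ker b$ with $(\ker b)w=v$.

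Next I would verify that this $w$ also satisfies $\widehat{a}w=u$. Using the defining identity $a\ker b=(\ker d)\widehat{a}$ of the induced morphism of kernels,
\[
(\ker d)\widehat{a}w=a(\ker b)w=av=(\ker d)u.
\]
Since $\ker d$ is a monomorphism (any kernel is, by the uniqueness clause in its universal property), I can cancel it on the left and conclude $\widehat{a}w=u$. Uniqueness of $w$ is automatic: any $w'$ with $(\ker b)w'=v$ must coincide with $w$ because $\ker b$ is likewise a monomorphism.

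There is no real obstacle here; the argument is a short diagram chase. The only non-formal input is the $\mathcal{N}$-monomorphism property of $c$ (equivalently, by Lemma~\ref{lemma_n_mono_ker_null_and_dual}, the fact that $c$ reflects null morphisms), which is precisely what lets a null composite $cbv$ descend to the nullness of $bv$ and thus produces the desired lift into $\Ker b$.
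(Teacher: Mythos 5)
Your proof is correct: the paper omits the argument as straightforward, and your direct verification of the universal property—using that $c$ is an $\mathcal{N}$-monomorphism to descend from $cbv\in\mathcal{N}$ to $bv\in\mathcal{N}$, then factoring through $\ker b$ and cancelling the (ordinary) monomorphism $\ker d$—is exactly the intended routine diagram chase. Nothing is missing.
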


The~following lemma is \cite[Lemma~2.5 and 2.7]{KoLe2024}.

\begin{lemma}[]\label{lemma_kop_weg}
In a semiexact category $(\mathcal{C}, \mathcal{N})$ the~following assertions hold:
\begin{enumerate}
    \item[(i)]
    if kernels are stable under composition then $\overline{f}$ is an~$\mathcal{N}$-epimorphism for any morphism $f$; moreover, if $hg$ is a~kernel then $g$ is a~kernel; 
    
    \item[(ii)]
    dually, if cokernels are stable under composition then $\overline{f}$ is an $\mathcal{N}$-monomorphism for any morphism $f$;  moreover, if $hg$ is a~cokernel then $h$ is a~cokernel.
\end{enumerate}
\end{lemma}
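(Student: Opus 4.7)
The plan is to exploit the normal factorization $f = (\im f)\,\overline{f}\,(\coim f)$ together with the stability of kernels under composition and the universal property of the image.

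For the first assertion of (i), I will apply the normal factorization to $\overline{f}$ itself, writing $\overline{f} = (\im \overline{f}) \cdot \overline{\overline{f}} \cdot (\coim \overline{f})$. Both $\im f$ and $\im \overline{f}$ are kernels, so by the stability hypothesis their composite $(\im f)(\im \overline{f})$ is a kernel. Substituting this into $f = (\im f)\,\overline{f}\,(\coim f)$ exhibits $f$ as factoring through this kernel, and by the universal property of $\im f = \ker(\coker f)$ as the smallest kernel through which $f$ factors there exists a morphism $t: \Img f \to \Img \overline{f}$ with $\im f = (\im f)(\im \overline{f})\, t$. Monicity of $\im f$ yields $(\im \overline{f})\, t = \mathrm{id}_{\Img f}$, so $\im \overline{f}$ is a split epimorphism; being also a kernel (and hence monic), it is an isomorphism. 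Consequently $\ker(\coker \overline{f}) = \im \overline{f}$ is iso, forcing $\coker \overline{f}$ to be null---precisely the condition for $\overline{f}$ to be an $\mathcal{N}$-epimorphism.

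For the ``moreover'' part of (i), suppose $hg$ is a kernel. Since kernels are $\mathcal{N}$-monomorphisms and $g\alpha$ null implies $hg\alpha$ null implies $\alpha$ null, $g$ is an $\mathcal{N}$-monomorphism; hence $\ker g$ is null, $\coim g = \mathrm{id}$, and $g = (\im g)\overline{g}$ with $\overline{g}$ an $\mathcal{N}$-epimorphism by the first assertion. Since $hg = \im(hg) = \ker(\coker(hg))$, and since Lemma~\ref{lemma_n_mono_ker_null_and_dual} gives $\coker(hg) = \coker(h \cdot \im g)$ (using that $\overline{g}$ is an $\mathcal{N}$-epimorphism), one gets $\im(hg) = \im(h \cdot \im g)$. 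This identification produces a morphism $\mu: \Img g \to A$ with $h \cdot \im g = hg \cdot \mu$; combining with $g = (\im g)\overline{g}$ and the monicity of $hg$ yields $\mu\overline{g} = \mathrm{id}_A$. Moreover $\mu = \overline{h \cdot \im g} \cdot \coim(h \cdot \im g)$ is a composition of $\mathcal{N}$-epimorphisms (the first by the first assertion applied to $h \cdot \im g$, the second as a cokernel), hence is itself an $\mathcal{N}$-epimorphism. A careful normal-factorization argument using the stability of kernels will then force $\overline{g}\mu = \mathrm{id}_{\Img g}$, yielding $\overline{g}$ iso and $g$ a kernel.

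Assertion (ii) will be proved by direct dualization: replacing kernels by cokernels, $\im$ by $\coim$, $\mathcal{N}$-epimorphisms by $\mathcal{N}$-monomorphisms, and split monomorphisms by split epimorphisms. The main obstacle will be the closing step of the ``moreover'' in (i)---concluding $\overline{g}\mu = \mathrm{id}_{\Img g}$ from $\mu\overline{g} = \mathrm{id}_A$ and the $\mathcal{N}$-epimorphism data. In the non-additive setting additive cancellation is unavailable, so one must exploit the universal properties of $\im$ and $\coker$ along with the stability of kernels to upgrade $\overline{g}$---which is simultaneously a split monomorphism and an $\mathcal{N}$-epimorphism---to a genuine isomorphism.
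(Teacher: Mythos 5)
The paper itself offers no proof of this lemma---it is quoted verbatim from \cite[Lemmas~2.5 and 2.7]{KoLe2024}---so there is no in-text argument to compare yours against; I can only assess your argument on its own terms. Your proof of the first assertion of (i) is correct and complete: factoring $\overline{f}$ normally, observing that $(\im f)(\im\overline{f})$ is a kernel through which $f$ factors, invoking the minimality of $\im f=\ker(\coker f)$ among such kernels to produce $t$ with $(\im\overline{f})\,t=\mathrm{id}_{\Img f}$, and concluding that the monic split epimorphism $\im\overline{f}$ is an isomorphism, whence $\coker\overline{f}\in\mathcal{N}$. (Each step checks out: kernels are genuine monomorphisms by the uniqueness clause of their universal property, and $\ker c$ invertible does force $c\in\mathcal{N}$ since $\mathcal{N}$ is an ideal.)

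The ``moreover'' clause, however, is not proved. You correctly reduce it to showing that $\overline{g}\colon A\to\Img g$ is an isomorphism, and you correctly establish that $\overline{g}$ is a split monomorphism (with retraction $\mu$ satisfying $\mu\overline{g}=\mathrm{id}_A$) and an $\mathcal{N}$-epimorphism, with $\mu$ also an $\mathcal{N}$-epimorphism. But the decisive step---deducing $\overline{g}\mu=\mathrm{id}_{\Img g}$---is only promised (``a careful normal-factorization argument\dots will then force''), not carried out, and it is exactly here that the non-additive difficulty is concentrated. In a preabelian category one would finish by noting that $\overline{g}$ is an epimorphism and cancelling it on the right in $\overline{g}\mu\overline{g}=\overline{g}$; in a semiexact category an $\mathcal{N}$-epimorphism is \emph{not} an epimorphism, an idempotent $\mathcal{N}$-epimorphism such as $e=\overline{g}\mu$ need not be the identity, and nothing in the data you have assembled ($e$ idempotent, $e$ an $\mathcal{N}$-epimorphism, $\im g\cdot e=g\mu$, $h\,\im g\cdot e=h\,\im g$) yields $e=\mathrm{id}$ without a further input; in particular, the hypothesis that kernels compose enters your argument only through the already-proved first assertion and is never brought to bear on this final cancellation. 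Until that step is supplied, the claim ``$hg$ a kernel $\Rightarrow$ $g$ a kernel'' is unproved, and assertion (ii), being its formal dual, inherits the same gap.
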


Consider the following axiom.

\begin{axiom}\label{axiom_ex2}
    Kernels and cokernels are closed under composition.
\end{axiom}

A semiexact category $(\mathcal{C}, \mathcal{N})$ is called an~\emph{ex2-category} if it satisfies Axiom~\ref{axiom_ex2}.
In the present work we make use of the next proposition which follows from Lemma~\ref{lemma_kop_weg}.

\begin{corollary}[]\label{corr_nbimorph_in_the_middle}
In an~ex2-category
the morphism $\overline{f}$ is an~$\mathcal{N}$-bimorphism, i.e. an~$\mathcal{N}$-monomorphism and $\mathcal{N}$-epimorphism, for any morphism $f$.
\end{corollary}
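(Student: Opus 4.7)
The plan is to observe that the statement is an immediate consequence of Lemma~\ref{lemma_kop_weg} once we unpack the definition of an ex2-category. By Axiom~\ref{axiom_ex2}, in an ex2-category both kernels \emph{and} cokernels are closed under composition simultaneously, so both hypotheses of Lemma~\ref{lemma_kop_weg} are satisfied at once.

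Concretely, I would argue as follows. Let $f\colon A\to B$ be an arbitrary morphism and consider its normal decomposition $f = (\im f)\,\overline{f}\,\coim f$ from \eqref{normal-fact}. Since kernels are stable under composition (by Axiom~\ref{axiom_ex2}), part~(i) of Lemma~\ref{lemma_kop_weg} applies and yields that $\overline{f}$ is an $\mathcal{N}$-epimorphism. Dually, since cokernels are also stable under composition (again by Axiom~\ref{axiom_ex2}), part~(ii) of the same lemma gives that $\overline{f}$ is an $\mathcal{N}$-monomorphism. Combining these two facts gives exactly the definition of an $\mathcal{N}$-bimorphism, which completes the argument.

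There is no real obstacle here: the work has already been done in Lemma~\ref{lemma_kop_weg}, and the role of the corollary is simply to package the two one-sided conclusions together under the single hypothesis of being an ex2-category. The only thing to be careful about is to make sure both implications are invoked for \emph{the same} normal factorization of $f$, which is automatic since $\overline{f}$ is determined (up to unique isomorphism) once $\im f$ and $\coim f$ are fixed.
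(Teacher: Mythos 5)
Your proposal is correct and coincides with the paper's own justification: the corollary is stated there as an immediate consequence of Lemma~\ref{lemma_kop_weg}, with Axiom~\ref{axiom_ex2} supplying both hypotheses (kernels stable under composition giving that $\overline{f}$ is an $\mathcal{N}$-epimorphism, cokernels stable giving that it is an $\mathcal{N}$-monomorphism). You have simply written out the one-line deduction the paper leaves implicit, and you correctly match each closure hypothesis to the corresponding part of the lemma.
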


\subsection{Homological categories}

In a semiexact category $(\mathcal{C}, \mathcal{N})$ a sequence of morphisms
\begin{equation}\label{diag_short_null_sequence}
\begin{tikzcd}
    X \ar["f",r] & Y \ar["g",r] & Z
\end{tikzcd}
\end{equation}
is called a~\emph{short null-sequence} if $g f \in \mathcal{N}$.
A sequence of morphisms
\begin{equation}\label{diag_long_null_sequence}
\begin{tikzcd}
    \ldots \ar[r,"f_{-2}"] & X_{-1} \ar[r,"f_{-1}"] & X_0 \ar[r,"f_0"] & X_1 \ar[r,"f_1"] & X_2 \ar[r,"f_2"] & \ldots
\end{tikzcd}
\end{equation}
in $(\mathcal{C}, \mathcal{N})$ is called a \emph{null-sequence} if any subsequence
$
\begin{tikzcd}
    X_i \ar["f_i",r] & X_{i+1} \ar["f_{i+1}",r] & X_{i+2}
\end{tikzcd}
$
is a short null-sequence.
The~\emph{left homology} (dually, \emph{right homology}) of
the~null-sequence~\eqref{diag_short_null_sequence} is
an object $H_{-}(f,g)$ (dually, $H_{+}(f,g)$) which is defined as
the cokernel $\Coker \sigma$ of the universal morphism
$\sigma: X\to\Ker g$ such that
$(\ker g) \sigma=f$
(dually, the kernel $\Ker \tau$ of the universal morphism
$\tau: \Coker f\to Z$ such that
$g=\tau \coker f$):
\begin{equation}\label{diag_homology_explained}
\begin{tikzcd}
    & \Ker g \ar[d,tail,"\ker g"] \ar[r,two heads,"\coker \sigma"] & H_{-}(f,g) \\
    X \ar[r,"f"] \ar[ru,bend left=15,dashed,"\sigma"] & Y \ar[r,"g"'] \ar[d,"\coker f"',two heads] & Z \\
    H_{+}(f,g) \ar[r,tail,"\ker \tau"'] & \Coker f \ar[ru,bend right=15,"\tau"',dashed] &
\end{tikzcd}
\end{equation}

\begin{lemma}    \label{lemma_m-morphism}
    In a~semiexact category $\mathcal{C},\mathcal{N}$, for null-sequence~\eqref{diag_short_null_sequence} there exists a unique morphism
    $$m(f,g) : H_{-}(f,g) \longrightarrow H_{+}(f,g)$$
    such that $(\ker \tau)\, m(f,g) \coker \sigma = (\coker f) \ker g$.
\end{lemma}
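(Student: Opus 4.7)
The plan is to construct $m(f,g)$ by two successive universal factorizations: first factor $(\coker f)(\ker g)$ through $\ker \tau$, and then factor the resulting morphism through $\coker \sigma$.

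First I would note that the composite $(\coker f)(\ker g)\colon \Ker g \to \Coker f$ is well-defined, and I would check the nullity needed to push it through the kernel $\ker\tau$. By the defining identity $\tau \coker f = g$, we have $\tau(\coker f)(\ker g) = g \ker g \in \mathcal{N}$. Since $\ker\tau$ is the kernel of $\tau$, there is a unique morphism $\alpha\colon \Ker g \to H_{+}(f,g)$ with $(\ker \tau)\alpha = (\coker f)(\ker g)$.

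Next I would show that $\alpha$ factors through $\coker\sigma$. Using $(\ker g)\sigma = f$, we compute
\[
(\ker \tau)(\alpha \sigma) = (\coker f)(\ker g)\sigma = (\coker f) f,
\]
which is null by the defining property of $\coker f$. Because $\ker\tau$ is a kernel, it is an $\mathcal{N}$-monomorphism (Lemma~\ref{lemma_n_mono_ker_null_and_dual}), so $\alpha \sigma \in \mathcal{N}$. Applying the universal property of $\coker\sigma$ then yields a unique morphism $m(f,g)\colon H_{-}(f,g) \to H_{+}(f,g)$ with $m(f,g)\coker\sigma = \alpha$. Post-composing with $\ker\tau$ gives the required identity $(\ker\tau)\, m(f,g)\coker\sigma = (\coker f)\ker g$.

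For uniqueness, suppose $m'$ satisfies the same identity. Then $(\ker\tau)\, m'\coker\sigma = (\ker\tau)\, m(f,g)\coker\sigma$; since kernels are monomorphisms we may cancel $\ker\tau$ to get $m'\coker\sigma = m(f,g)\coker\sigma$, and since cokernels are epimorphisms we may cancel $\coker\sigma$ to obtain $m' = m(f,g)$. I do not anticipate any real obstacle here: the proof is a straightforward diagram chase in diagram~\eqref{diag_homology_explained}, and the only subtlety is remembering that kernels (resp. cokernels) are automatically monic (resp. epic), so the two cancellation steps in the uniqueness argument are legitimate.
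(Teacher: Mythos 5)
Your proof is correct and takes essentially the same route as the paper's: both construct $m(f,g)$ by two successive universal factorizations of $(\coker f)\ker g$, the only difference being that you factor through $\ker\tau$ first and then through $\coker\sigma$, while the paper factors through $\coker\sigma$ first (obtaining an intermediate $k\colon H_{-}(f,g)\to\Coker f$) and then through $\ker\tau$. Your uniqueness argument via monicity of kernels and epicity of cokernels is also exactly what the paper's final uniqueness claim rests on.
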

\begin{proof}
Note that in~diagram~\eqref{diag_homology_explained} we have $(\coker f) (\ker g) \sigma \in \mathcal{N}$.
Hence, there exists a~unique 
$$k : H_{-}(f,g) \longrightarrow \Coker f$$
such that $(\coker f) (\ker g) \sigma = k \coker \sigma$.
Note that $\tau k \coker \sigma \in \mathcal{N}$ and hence $\tau k \in \mathcal{N}$.
Thus, there exists a~unique $m : H_{-}(f,g) \to H_{+}(f,g)$ such that $(\ker \tau) m = k$.
\[
\begin{tikzcd}
    & \Ker g \ar[d,tail,"\ker g"] \ar[r,two heads,"\coker \sigma"] & H_{-}(f,g) \ar[ddl,"k",dashed,bend left=100] \ar[lldd,bend right=100,dashed,"m"'] \\
    X \ar[r,"f"'] \ar[ru,bend left=10,dashed,"\sigma"] & Y \ar[r,"g"] \ar[d,"\coker f"',two heads] & Z \\
    H_{+}(f,g) \ar[r,tail,"\ker \tau"'] & \Coker f \ar[ru,bend right=10,"\tau"',dashed] &
\end{tikzcd}
\]
Note that $m$ is the~unique morphism such that $(\ker \tau) m \coker \sigma = (\coker f) \ker g$.
Hence, $m$ is the~desired morphism and
we put $m(f,g) := m$.
\end{proof}

As in \cite[1.5.2]{MGrandis-2013},
we call sequence (\ref{diag_long_null_sequence})
\emph{exact at $X_i$} if $\im f_{i-1} = \ker f_i$, or, equivalently,
$\coker f_{i-1} = \coim f_i$ due to the~equalities
$$\coker f_{i-1} = \coker(\im f_{i-1}) = \coker(\ker f_{i}) = \coim f_i.$$
For example, a sequence of the form
\[
\begin{tikzcd}
A \ar[r,"f"] & B \ar[r,"\mathrm{null}"] & C
\end{tikzcd}
\quad\quad
\left(\text{respectively, }
\begin{tikzcd}
A' \ar[r,"\mathrm{null}"] & B' \ar[r,"g"] & C'
\end{tikzcd}
\right)
\]
is exact when $\coker f\in\mathcal{N}$ 
(respectively,
$\ker g\in\mathcal{N}$).

\smallskip

Consider the~following axiom \cite{MGrandis-2013}.

\begin{axiom}\label{axiom_ex3}
    Consider a kernel
    $i: A \rightarrowtail B$ and a cokernel
    $q: B \twoheadrightarrow C$
    such that $(\coker i) \ker q \in \mathcal{N}$.
    Then there exist a cokernel
    $\pi: A \twoheadrightarrow H$
    and a kernel
    $\iota: H \rightarrowtail C$
    which make the~square
    \[
    \begin{tikzcd}
        A \ar[d,two heads,dashed,"\pi"'] \ar[r,tail,"i"] & B \ar[d,two heads,"q"]\\
        H \ar[r,tail,dashed,"\iota"'] & C
    \end{tikzcd}
    \]
commute.
\end{axiom}
The~object $H$ in~Axiom~\ref{axiom_ex3} is called a~\emph{homology object}.
An ex2-category is called  \emph{homological} (or an \emph{ex3-category}) if it satisfies Axiom~\ref{axiom_ex3}.

\begin{corollary}\label{corr_iso_of_homology_to_ex3}
    In~a~semiexact category, if for any null-sequence $(f,g)$ the~morphism~$m(f,g)$ is an~isomorphism then Axiom~\ref{axiom_ex3} is satisfied.
\end{corollary}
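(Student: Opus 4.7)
The plan is to recognize that the hypothesis of Axiom~\ref{axiom_ex3} naturally yields a null-sequence, and that the two homology objects $H_-$ and $H_+$ of that sequence, together with their defining maps, provide the cokernel $\pi$ and the kernel $\iota$ needed in the conclusion. Once $m(f,g)$ is assumed to be an isomorphism, the two candidates can be glued to produce a single $H$ making the square commute.

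Given a kernel $i: A \rightarrowtail B$ and a cokernel $q: B \twoheadrightarrow C$ with $(\coker i)(\ker q) \in \mathcal{N}$, I would first form the null-sequence
$$\Ker q \xrightarrow{\;\ker q\;} B \xrightarrow{\;\coker i\;} \Coker i.$$
Since kernels are normal monomorphisms and cokernels are normal epimorphisms in a semiexact category, one has $\ker(\coker i) = i$ (identifying $\Img i$ with $A$) and $\coker(\ker q) = q$ (identifying $\Coim q$ with $C$). Under these identifications, the universal map $\sigma$ entering the definition of $H_{-}$ becomes $\sigma: \Ker q \to A$ with $i\sigma = \ker q$, and the universal map $\tau$ entering the definition of $H_{+}$ becomes $\tau: C \to \Coker i$ with $\tau q = \coker i$. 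Therefore $H_{-}(\ker q, \coker i) = \Coker \sigma$ is received via a genuine cokernel $\coker \sigma: A \twoheadrightarrow H_{-}$, and $H_{+}(\ker q, \coker i) = \Ker \tau$ is sent to $C$ via a genuine kernel $\ker \tau: H_{+} \rightarrowtail C$.

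The key computation, furnished directly by Lemma~\ref{lemma_m-morphism} applied to $f := \ker q$ and $g := \coker i$, is
$$(\ker \tau)\, m(\ker q, \coker i)\, \coker \sigma \;=\; (\coker \ker q)(\ker \coker i) \;=\; q\, i .$$
Now I would invoke the hypothesis that $m := m(\ker q, \coker i)$ is an isomorphism and set $H := H_{-}$, $\pi := \coker \sigma$, $\iota := (\ker \tau)\, m$. Then $\pi$ is a cokernel by construction, $\iota$ is a kernel because the composite of a kernel with an isomorphism on the right is again a kernel (of the same morphism $\tau$), and the displayed identity is exactly $\iota \pi = qi$, which is the required commutativity of the Axiom~\ref{axiom_ex3} square.

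The proof is almost entirely bookkeeping; the only nontrivial insight is choosing the correct null-sequence, namely the one whose middle term is $B$ and whose outer arrows are $\ker q$ and $\coker i$. Once this choice is made, the normality of kernels and cokernels collapses $H_{-}$ and $H_{+}$ into the two halves of the desired diagonal factorization of $qi$, and the assumption on $m$ serves only to glue them into a single object. I do not foresee any genuine obstacle beyond verifying the above identification of $\sigma$ and $\tau$, which is immediate from the universal properties.
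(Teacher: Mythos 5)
Your proposal is correct and follows essentially the same route as the paper: both form the null-sequence $\Ker q \xrightarrow{\ker q} B \xrightarrow{\coker i} \Coker i$, identify $\Ker(\coker i)$ with $A$ and $\Coker(\ker q)$ with $C$, and apply Lemma~\ref{lemma_m-morphism} to read off $\iota\pi = qi$ once $m(\ker q,\coker i)$ is invertible. The only (immaterial) difference is that you absorb the isomorphism into $\iota$ on the $H_-$ side while the paper absorbs it into $\pi$ on the $H_+$ side.
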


\begin{proof}
Suppose that $i : A \rightarrowtail B$ and $q: B \twoheadrightarrow C$ are a~kernel and a~cokernel such that $(\coker i) \ker q \in \mathcal{N}$.
The~pair of morphisms $(i,q)$ forms a~short exact sequence.
Hence, we have the~diagram
\[
\begin{tikzcd}
    & A \ar[d,tail,"i"] \ar[r,two heads,"\coker \sigma"] & H_{-}(\ker q,\coker i) \\
    \Ker q \ar[r,"\ker q"',tail] \ar[ru,bend left=30,dashed,"\sigma"] & B \ar[r,"\coker i",two heads] \ar[d,"q"',two heads] & \Coker i \\
    H_{+}(\ker q,\coker i) \ar[r,tail,"\ker \tau"'] & C \ar[ru,bend right=30,"\tau"',dashed] &
\end{tikzcd}
\]
whose row and column are null-sequences.
By Lemma~\ref{lemma_m-morphism},
the morphism
$m(\ker q, \coker i)$
makes this diagram commute.
Since $m(\ker q,
\coker i)$ is an
isomorphism,
Axiom~\ref{axiom_ex3} is satisfied:
\[
\begin{tikzcd}
    A \ar[d,two heads,"\coker \sigma"'] \ar[rr,tail,"i"] & & B \ar[dd,two heads,"q"]\\
    H_{-}(\ker q,\coker i) \ar[rd,"m(\ker q {,} \coker i)"',"\cong"] & & \\
    & H_{+}(\ker q,\coker i) \ar[r,tail,"\ker \tau"'] & C
\end{tikzcd}
\]
\end{proof}

The following lemma is \cite[Theorem 2.2.2(ex3b) and (ex3b*)]{Gr92}.
\begin{lemma}
The following hold in a~homological category:\\
(i)
the pullback of a pair
$\cdot\twoheadrightarrow\cdot\leftarrowtail\cdot$
is of the form $\cdot\leftarrowtail\cdot\twoheadrightarrow\cdot$\ ;\\
(ii)
dually,
the pushout of a pair
$\cdot\leftarrowtail\cdot\twoheadrightarrow\cdot$
is of the form $\cdot\twoheadrightarrow\cdot\leftarrowtail\cdot$\ .
\end{lemma}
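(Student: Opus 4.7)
The plan is to prove (i); part (ii) then follows by formal duality. Given a cospan $A \xrightarrow{q} C \xleftarrow{i} B$ with $q = \coker q_0$ and $i = \ker i'$, I would first invoke the earlier pullback lemma (applied to the kernel $i$): it constructs the pullback as $P = \Ker(i'q)$ with projections $p = \ker(i'q) : P \rightarrowtail A$ (already a kernel by construction) and a unique $q'' : P \to B$ with $iq'' = qp$. So the top edge of the pullback square is automatically a kernel; everything reduces to showing that $q''$ is a cokernel.

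Next I would apply Axiom~\ref{axiom_ex3} to the composable pair $(p,q)$, a kernel followed by a cokernel. The compatibility $(\coker p)(\ker q)\in\mathcal{N}$ holds because $q(\ker q)\in\mathcal{N}$ forces $i'q(\ker q)\in\mathcal{N}$, so $\ker q$ factors through $p=\ker(i'q)$, and composing with $\coker p$ annihilates it. Axiom~\ref{axiom_ex3} then yields a cokernel $\pi:P\twoheadrightarrow H$ and a kernel $\iota:H\rightarrowtail C$ with $\iota\pi=qp$. Comparing the two factorizations $qp=\iota\pi=iq''$: from $i'\iota\pi=i'qp\in\mathcal{N}$ and the fact that $\pi$ is an $\mathcal{N}$-epimorphism, $i'\iota\in\mathcal{N}$, whence $\iota=i\xi$ for a unique $\xi:H\to B$; substituting and right-cancelling $i$ (an $\mathcal{N}$-monomorphism) gives $\xi\pi=q''$.

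It then remains to argue that $\xi$ is an isomorphism, for then $q''=\xi\pi$ is itself a cokernel. Kernels in any semiexact category are ordinary monomorphisms and are normal ($i=\ker\coker i$ and $\iota=\ker\coker\iota$), so producing $\eta:B\to H$ with $\iota\eta=i$ would at once force $\xi\eta=\mathrm{id}_B$ and $\eta\xi=\mathrm{id}_H$. Existence of $\eta$ is equivalent to $(\coker\iota)i\in\mathcal{N}$, and since $(\coker\iota)iq''=(\coker\iota)qp=(\coker\iota)\iota\pi\in\mathcal{N}$, the last hurdle is to right-cancel $q''$, i.e.\ to establish that $q''$ is an $\mathcal{N}$-epimorphism.

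This last point is the main obstacle. My plan for it is to use, in a Grandis homological category, that the ex3 square $qp=\iota\pi$ is in fact bicartesian (both a pullback and a pushout), so that $(P,p,\pi)$ already realises the pullback of the cospan $(q,\iota)$ uniquely up to isomorphism. Combined with the factorization $\iota=i\xi$, this produces a comparison morphism $B\to H$ directly via the universal property of the pullback, giving $\eta$ without having to know beforehand that $q''$ is $\mathcal{N}$-epi. Part (ii) is then obtained by dualizing the entire argument.
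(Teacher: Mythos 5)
The paper does not actually prove this lemma --- it is quoted from \cite[Theorem~2.2.2]{Gr92} --- so your argument can only be judged on its own terms. Your reduction is the right one: the unnumbered pullback lemma of Section~2, applied to the kernel $i=\ker i'$, does realize the pullback as $P=\Ker(i'q)$ with $p=\ker(i'q)$ already a kernel, so everything hinges on showing that the second projection $q''$ is a cokernel. Your verification of the hypothesis of Axiom~\ref{axiom_ex3} for the pair $(p,q)$, the factorization $\iota=i\xi$ with $\xi\pi=q''$, and the observation that $\xi$ is invertible as soon as some $\eta:B\to H$ with $\iota\eta=i$ exists, are all correct.

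The last step, however, has a genuine gap. Knowing that the square $\iota\pi=qp$ is bicartesian does not produce $\eta$: the pullback property of $(P,p,\pi)$ over the cospan $(q,\iota)$ only manufactures morphisms \emph{into} $P$ from compatible pairs over $C$, and the pushout property only manufactures morphisms \emph{out of} $C$; neither universal property has $B$ as its source or $H$ as its target, and the factorization $\iota=i\xi$ feeds them nothing new (it merely recovers $q''=\xi\pi$). So you are back to needing $(\coker\iota)\,i\in\mathcal{N}$, i.e.\ to cancelling $q''$ on the right, which is exactly the point at issue. A clean way to finish, using only tools already in the paper, is to apply the homology-coincidence theorem of Section~2 to a different null sequence: writing $q=\coker q_0$ with $q_0:A_0\to A$, the sequence $A_0\xrightarrow{q_0}A\xrightarrow{i'q}\bullet$ is null; its right homology is $\Ker\tau=\Ker i'=B$ with $\ker\tau=i$ (since $\tau=i'$), its left homology is $\Coker\sigma$ with $\sigma:A_0\to\Ker(i'q)=P$, and the defining identity of Lemma~\ref{lemma_m-morphism}, namely $i\,m\,\coker\sigma=qp=iq''$, yields $q''=m\coker\sigma$ after cancelling the monomorphism $i$. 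Since $m$ is an isomorphism in a homological category, $q''$ is a cokernel. With that substitution your proof closes, and the dual statement follows as you say.
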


\begin{theorem}
    An~ex2-category $(\mathcal{C},\mathcal{N}$ is a homological category if~and~only~if
    for any null-sequence $(f,g)$ the~morphism $m(f,g) : H_{-}(f,g) \to H_{+}(f,g)$ is an~isomorphism.
\end{theorem}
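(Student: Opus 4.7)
Corollary~\ref{corr_iso_of_homology_to_ex3} already supplies the reverse implication, so my plan focuses on the forward direction: assuming Axiom~\ref{axiom_ex3}, I would show that $m(f,g)$ is an isomorphism for every null-sequence $X \xrightarrow{f} Y \xrightarrow{g} Z$. The strategy is to apply Axiom~\ref{axiom_ex3} to the kernel-cokernel pair $(\ker g, \coker f)$ and to identify the resulting homology object $H$ with both $H_{-}(f,g)$ and $H_{+}(f,g)$.

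First, I would verify the ex3 hypothesis $(\coker(\ker g))(\ker(\coker f)) = (\coim g)(\im f) \in \mathcal{N}$. Writing $f = (\im f) f'$ with $f'$ an $\mathcal{N}$-epimorphism and $g = g''(\coim g)$ with $g'' = (\im g)\overline{g}$ an $\mathcal{N}$-monomorphism---both facts following from Corollary~\ref{corr_nbimorph_in_the_middle} together with the stability of $\mathcal{N}$-mono/epi under composition---the hypothesis $gf \in \mathcal{N}$ successively yields $(\coim g) f \in \mathcal{N}$ by $\mathcal{N}$-monicity of $g''$, and then $(\coim g)(\im f) \in \mathcal{N}$ by $\mathcal{N}$-epicity of $f'$. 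Axiom~\ref{axiom_ex3} then produces a cokernel $\pi : \Ker g \twoheadrightarrow H$ and a kernel $\iota : H \rightarrowtail \Coker f$ with $\iota \pi = (\coker f)(\ker g)$.

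Next I would identify $H$ with both $H_{-}(f,g)$ and $H_{+}(f,g)$. The nullness of $(\coim g)(\im f)$ gives a unique $j : \Img f \to \Ker g$ with $(\ker g) j = \im f$, and a direct universal-property check (using that $\ker g$ is monic and $\im f = \ker\coker f$) shows $j = \ker\bigl((\coker f)(\ker g)\bigr)$. Since $\sigma = j f'$ by cancellation of the mono $\ker g$, and $f'$ is an $\mathcal{N}$-epimorphism, one obtains $\coker \sigma = \coker j = \coim\bigl((\coker f)(\ker g)\bigr)$, which coincides with $\pi$ up to canonical isomorphism; dually $\ker \tau = \ker j' = \iota$ for the corresponding $j' = \coker\bigl((\coker f)(\ker g)\bigr)$. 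Substituting into the defining equation $(\ker \tau)\, m(f,g)\, (\coker \sigma) = \iota \pi$ and cancelling the monomorphism $\ker \tau$ and the epimorphism $\coker \sigma$ shows that $m(f,g)$ is the composite of the canonical isomorphisms $H_{-}(f,g) \cong H \cong H_{+}(f,g)$. The main technical obstacle will be these identification steps---verifying via universal properties that $j$ and $j'$ are respectively the kernel and the cokernel of $(\coker f)(\ker g)$, which is what forces the ex3 factorization of the middle composite to coincide with both homology constructions.
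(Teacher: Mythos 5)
Your proof is correct, and its skeleton agrees with the paper's up to the key identification step: both directions use Corollary~\ref{corr_iso_of_homology_to_ex3} for sufficiency, and for necessity both apply Axiom~\ref{axiom_ex3} to the pair $(\ker g,\coker f)$ after verifying $(\coim g)(\im f)\in\mathcal{N}$ in exactly the way you do, via Corollary~\ref{corr_nbimorph_in_the_middle}. Where you genuinely diverge is in identifying the ex3 homology object $H$ with $H_{-}(f,g)$ and $H_{+}(f,g)$. The paper introduces comparison morphisms $\lambda: H_{-}(f,g)\to H$ and $\rho: H\to H_{+}(f,g)$ from the universal properties, shows by a direct factorization argument that the square with sides $\ker g$, $\coker\sigma$, $\iota\lambda$, $\coker f$ is a pushout (and its mirror a pullback), deduces that $\iota\lambda$ is a kernel and $\rho\pi$ a cokernel, and then uses Lemma~\ref{lemma_kop_weg} to conclude that $\lambda$ and $\rho$ are simultaneously kernels and cokernels, hence isomorphisms. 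You instead read Axiom~\ref{axiom_ex3} as saying that $(\coker f)\ker g$ is exact with $\pi=\coim\bigl((\coker f)\ker g\bigr)$ and $\iota=\im\bigl((\coker f)\ker g\bigr)$, and then compute directly that $\coker\sigma$ and $\ker\tau$ are that same coimage and image, using $\sigma=jf'$ with $f'=\overline{f}\coim f$ an $\mathcal{N}$-epimorphism, $j=\ker\bigl((\coker f)\ker g\bigr)$, and Lemma~\ref{lemma_n_mono_ker_null_and_dual}(iii). This route is somewhat shorter and avoids the pushout/pullback verification entirely, at the cost of the (routine, and valid) universal-property checks you flag; both arguments ultimately rest on the same ex2 input, namely that $\overline{f}$ is an $\mathcal{N}$-bimorphism.
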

\begin{proof}
    By~Corollary~\ref{corr_iso_of_homology_to_ex3}, in~an~ex2-category, if $m(f,g)$ is an~isomorphism, for any null-sequence $(f,g)$, then Axiom~\ref{axiom_ex3} is satisfied.
    Suppose that Axiom~\ref{axiom_ex3} is satisfied.
    Consider a~null-sequence
    \[
    \begin{tikzcd}
        A \ar[r,"f"] \ar[rr,"\mathrm{null}"',bend right=30] & B \ar[r,"g"] & C
    \end{tikzcd}
    \]
    Consider the~pair~$(\ker g, \coker f)$.
    Note that
    $$
    gf = (\im g)\, \overline{g}\, (\coim g)\, (\im f)\, \overline{f} \coim f \in \mathcal{N}.
    $$
    Hence,
    $$
    (\coker(\ker g)) \ker \coker f =
    (\coim g) \im f \in \mathcal{N}
    $$
    because $(\im g)\, \overline{g}$ is an~$\mathcal{N}$-monomorphism and $\overline{f}\, (\coim f)$ is an~$\mathcal{N}$-epimorphism  due to Corollary~\ref{corr_nbimorph_in_the_middle}.
    Therefore, by Axiom~\ref{axiom_ex3}, there exist a~cokernel and a~kernel which make the square
    \[
    \begin{tikzcd}
        \Ker g \ar[r,"\ker g",tail] \ar[dashed,d,two heads,"\pi"'] & B \ar[d,"\coker f",two heads] \\
        H \ar[dashed,r,tail,"\iota"'] & \Coker f
    \end{tikzcd}
    \]
    commute.
    Let us prove that $H$ is isomorphic to both $H_{-}(f,g)$ and $H_{+}(f,g)$.
    By the~universal property of a~kernel (cokernel), there exists a unique $\lambda : H_{-}(f,g) \to H$ such that $\pi = \lambda \coker \sigma$ and there exists a unique $\rho : H \to H_{+}(f,g)$ such that $\iota = (\ker \tau) \rho$.
    We will prove that in fact $\lambda$ and $\rho$ are isomorphisms.
    Hence, $\lambda$ is a cokernel and $\rho$ is a kernel.
    Consider the commutative square:
    \[
    \begin{tikzcd}
        \Ker g \ar[r,"\coker \sigma",two heads] \ar[d,"\ker g"',tail] & \Coker \sigma \ar[d,"\iota \lambda"] \ar[ddr,"x",bend left=25] \\
        B \ar[r,"\coker f"',two heads] \ar[drr,"y"',bend right=25] & \Coker f \ar[dashed,"z",rd] \\
        & & Z
    \end{tikzcd}
    \]
    Consider morphisms $x$ and $y$ such that $x \coker \sigma = (\ker g) y$.
    From~(\ref{diag_homology_explained}) it is clear that
    $$
    yf = y (\ker g) \sigma = x (\coker \sigma) \sigma \in \mathcal{N}.
    $$
    Hence, there exists a unique $z : \Coker f \to Z$ such that $y = z \coker f$ and $x = z \iota \lambda$ due to the identity
    $$
    x \coker \sigma = z \iota \lambda \coker \sigma.
    $$
    Hence, the~left-hand square is a~pushout and, dually, the~right-hand square is a~pullback:
    \[
    \begin{tikzcd}
        \Ker g \ar[r,"\coker \sigma",two heads] \ar[d,"\ker g"',tail] \ar[r, phantom, shift right=4ex, "{\rm PO}" marking] & \Coker \sigma \ar[d,"\iota \lambda",tail] \\
        B \ar[r,"\coker f"',two heads] & \Coker f
    \end{tikzcd}
    \quad\quad
    \quad\quad
    \begin{tikzcd}
        \Ker g \ar[r,"\ker g",tail] \ar[d,"\rho \pi"',two heads] \ar[r, phantom, shift right=4ex, "{\rm PB}" marking] & B \ar[d,"\coker f",two heads] \\
        \Ker \tau \ar[r,"\ker \tau"',tail] & \Coker f
    \end{tikzcd}
    \]
    Note that $\iota \lambda$ is a kernel and $\rho \pi$ is a cokernel and hence
    $$
    \lambda : H_{-} (f,g) \to H,
    \quad\quad
    \quad\quad
    \rho : H \to H_{+} (f,g)
    $$
    are isomorphisms.
\end{proof}

In a homological category, since the left homology of a null-sequence $(f,g)$ is isomoprhic to the right homology, we denote its homology by $H(f,g) := H_{-}(f,g) \cong H_{+}(f,g)$ and for a~long null-sequence~\eqref{diag_long_null_sequence} we denote its \emph{$n$th homology} by $H^n(X) := H^n_{-}(X) \cong H^n_{+}(X)$.

The following lemma is crucial for the
proof of the main theorems
(see \cite{KoLe2024} for a~proof).

\begin{lemma}[The Composition Lemma]\label{composition_lemma}
    Given two morphisms $f$ and $g$
    in a homological category
    such that the composition $gf$
    is defined,
    there exists a null-sequence    \begin{equation}\label{composition_lemma_sequence}
    \begin{tikzcd}
        \bullet \ar[r,"{\rm null}"] & \Ker f \ar[r,"\varphi"] & \Ker(gf) \ar[r,"\psi"] \ar[d, phantom, ""{coordinate, name=Z}] & \Ker g \ar[
            dll,
            "\chi"',
            rounded corners,
            to path={ -- ([xshift=2ex]\tikztostart.east)
            |- (Z) [near end]\tikztonodes
            -| ([xshift=-2ex]\tikztotarget.west) 
            -- (\tikztotarget)}
        ] & \\
        & \Coker f \ar[r,"\varepsilon"] & \Coker(gf) \ar[r,"\omega"] & \Coker g \ar[r,"{\rm null}"] & \bullet
    \end{tikzcd}
    \end{equation}
    which is exact at $\Ker f$, $\Ker(gf)$, $\Coker(gf)$, $\Coker g$.
    Moreover,
    \begin{enumerate}
        \item
        $\varphi$ and $\omega$ are exact morphisms;
        \item
        if $f$ is an exact morphism then {\rm \eqref{composition_lemma_sequence}} is exact at $\Ker g$ and $\psi$ is an exact morphism.
        Dually, if $g$ is an exact morphism then {\rm \eqref{composition_lemma_sequence}} is exact at $\Coker f$ and $\varepsilon$ is an exact morphism.
    \end{enumerate}
\end{lemma}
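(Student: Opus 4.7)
The plan is to build the six morphisms $\varphi,\psi,\chi,\varepsilon,\omega$ by successive universal properties, confirm that consecutive pairs compose to null morphisms, and then establish exactness at each named position. I would define $\varphi\colon\Ker f\to\Ker(gf)$ as the unique factorization of $\ker f$ through $\ker(gf)$ coming from $(gf)\ker f = g(f\ker f)\in\mathcal{N}$, and $\psi\colon\Ker(gf)\to\Ker g$ as the unique factorization of $f\ker(gf)$ through $\ker g$ coming from $g\cdot f\ker(gf)=(gf)\ker(gf)\in\mathcal{N}$. Dually I obtain $\varepsilon$ and $\omega$. For the connecting morphism I simply take the composite $\chi:=(\coker f)(\ker g)$.

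Verifying the null-sequence property is then routine substitution: for instance $\chi\psi = (\coker f)\,f\,\ker(gf)\in\mathcal{N}$ and $\varepsilon\chi=\coker(gf)\cdot g\cdot\ker g\in\mathcal{N}$, and the other consecutive pairs are analogous. Exactness at $\Ker f$ reduces to showing that $\ker\varphi\in\mathcal{N}$, which follows from $\ker(gf)\,\varphi=\ker f$ together with Lemma~\ref{lemma_n_mono_ker_null_and_dual}, since a kernel is automatically an $\mathcal{N}$-monomorphism. Exactness at $\Coker g$ is dual.

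The heart of the lemma is exactness at $\Ker(gf)$ and $\Coker(gf)$. For the first, I would form the pullback of $\ker g$ along $f$; by Lemma~\ref{PB} its apex is (isomorphic to) $\Ker(gf)$ with induced maps $\ker(gf)$ and $\psi$, from which one reads an identification $\Ker\psi\cong\Ker f$. Upgrading this pullback identification to the normal-image identity $\im\varphi=\ker\psi$ is exactly where Axiom~\ref{axiom_ex3}, equivalently the identification $H_{-}\cong H_{+}$ established in the theorem just above, becomes essential: it converts a pullback-style description of $\Ker\psi$ into an equality of a normal kernel with the canonical image of $\varphi$. Exactness at $\Coker(gf)$ is dual, using the pushout form of Lemma~\ref{PB}. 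I expect this pullback-to-normal-image conversion to be the principal obstacle of the proof, because in a mere semi-exact or ex2-category the evident morphism $\im\varphi\to\ker\psi$ need not be an isomorphism.

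Finally, for the \emph{moreover} clause, exactness of $f$ means that the normal decomposition $f=(\im f)(\coim f)$ presents $f$ as a kernel following a cokernel; substituting this into $(\ker g)\psi=f\ker(gf)$ and applying the homological pullback/pushout rules on pairs $\cdot\leftarrowtail\cdot\twoheadrightarrow\cdot$ and $\cdot\twoheadrightarrow\cdot\leftarrowtail\cdot$ (stated in the unlabeled lemma immediately preceding the theorem above) exhibits $\psi$ itself as exact and yields exactness of the sequence at $\Ker g$. Dually, exactness of $g$ gives exactness of $\varepsilon$ and exactness at $\Coker f$, completing the proof.
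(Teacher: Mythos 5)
The paper itself offers no proof of this lemma --- it simply cites \cite{KoLe2024} --- so your proposal can only be measured against what a correct argument must contain, and on that score it has two concrete problems. The first is a misdiagnosis of where the homological axiom is needed. You declare the passage from the pullback description of $\Ker(gf)$ to the identity $\im\varphi=\ker\psi$ to be ``the principal obstacle,'' invoke Axiom~\ref{axiom_ex3} for it without saying how it is used, and assert that in a mere semiexact category the comparison $\im\varphi\to\ker\psi$ need not be an isomorphism. That assertion is false: $\varphi$ \emph{is} a kernel of $\psi$ in any semiexact category, by a direct universal-property check. Indeed, $(\ker g)\psi\varphi=f\ker(gf)\varphi=f\ker f\in\mathcal{N}$ and $\ker g$ is an $\mathcal{N}$-monomorphism, so $\psi\varphi\in\mathcal{N}$; and if $\psi x\in\mathcal{N}$ then $f\ker(gf)x=(\ker g)\psi x\in\mathcal{N}$, so $\ker(gf)x=(\ker f)x'=\ker(gf)\varphi x'$ for a unique $x'$, whence $x=\varphi x'$ because every kernel is an ordinary monomorphism (by the uniqueness clause of its universal property). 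Since $\varphi$ is then a kernel, $\im\varphi=\ker\coker\varphi=\varphi=\ker\psi$. No pullback and no Axiom~\ref{axiom_ex3} enters; the step you flag as the heart of the lemma is the elementary one, and the step you actually need is left as an unexplained assertion.

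The second problem is that item (1) of the ``Moreover'' clause --- that $\varphi$ and $\omega$ are exact \emph{morphisms} --- is not addressed at all. This is where Axiom~\ref{axiom_ex2} genuinely matters: since $\ker(gf)\varphi=\ker f$ is a kernel, Lemma~\ref{lemma_kop_weg}(i) shows $\varphi$ is itself a kernel, hence exact, and dually $\omega$ is a cokernel. For item (2) you point at the right tool (the unlabeled lemma on pullbacks of cospans $\cdot\twoheadrightarrow\cdot\leftarrowtail\cdot$), but the argument should be made explicit: writing $f=(\im f)\coim f$ and computing the pullback of $\ker g$ along $f$ in two stages, one gets $\psi=(\ker\chi)\,\theta$ where $\ker\chi$ is the pullback of $\ker g$ along $\im f$ and $\theta$ is the pullback of the cokernel $\coim f$ along a kernel, hence a cokernel by that lemma; this simultaneously yields $\im\psi=\ker\chi$ (exactness at $\Ker g$) and the exactness of $\psi$. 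Your construction of the six morphisms, the null-composite verifications, and exactness at $\Ker f$ and $\Coker g$ are correct.
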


\section{Lambek's Invariants $\Ker$ and $\Img$}\label{sec-lamb}

In a semiexact category $(\mathcal{C}, \mathcal{N})$, consider the square denoted by $S$:
\begin{equation}\label{sqq}
\begin{tikzcd}
    A \ar[r,"f"] \ar[d,"a"'] \ar[r, phantom, shift right=4ex, "S" marking] & B \ar[d,"b"] \\
    C \ar[r,"g"'] & D
\end{tikzcd}
\end{equation}
In~\cite{Lambek-1964}, Lambek 
defined the invariants $\Img S$ and $\Ker S$ 
of the commutative square $S$ in the category of groups.
In a~semiexact category, these invariants are generalized as follows
(cf. \cite{No1}):
as $\Img S := \Coker \lambda_S$, $\Ker S := \Ker \rho_S$, where $\lambda_S$ 
and $\rho_S$ are the morphisms defined by the universal property of a~pullback and a~pushout:
\begin{equation}\label{def_inv}
\begin{tikzcd}
    & A \ar[dd,"a"'] \ar[rr,"f"] \ar[rd,dashed,"\lambda_S"] & & B \ar[d]\\
    \Img S & & \bullet \ar[r,"l_S",tail] \ar[d] 
\ar[ll,"\coker \lambda_S"{xshift=-30pt},bend left=30,two heads,crossing over] 
\ar[r, phantom, shift right=4ex, "{\rm PB}" marking] & \bullet \ar[d,"\im b",tail]\\
    & C \ar[r] & \bullet \ar[r,"\im g"',tail] & D
\end{tikzcd}
\quad
\begin{tikzcd}
    A \ar[d,"\coim a"',two heads] \ar[r,"\coim f",two heads] 
\ar[r, phantom, shift right=4ex, "{\rm PO}" marking] & \bullet \ar[r] \ar[d] & B \ar[dd,"b"] &\\
    \bullet \ar[d] \ar[r,"r_S"',two heads] & \bullet \ar[rd,dashed,"\rho_S"] & & \Ker S
\ar[ll,"\ker \rho_S"'{xshift=30pt},bend right=30,tail,crossing over] \\
    C \ar[rr,"g"'] & & D &
\end{tikzcd}
\end{equation}

The~following lemma is Lemma~2.10 in \cite{No1}. Its easy proof is identical 
to the~proof in a~Puppe exact category given in~\cite{No1}.

\begin{lemma}\label{ep-mn-triv}
If in diagram~\eqref{sqq} in a~semiexact catory $(\mathcal{C},\mathcal{N})$, $f$ is an~$\mathcal{N}$-epimorphism then $\mathrm{Im\,}S$ is a~null object. If in~\eqref{sqq} $g$ is an~$\mathcal{N}$-monomorphism then $\mathrm{Ker\,}S$ is a~null object. 
\end{lemma}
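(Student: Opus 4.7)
The plan is to prove the first assertion and then invoke duality for the second. Since $\Img S := \Coker \lambda_S$, the first claim is equivalent to showing that $\lambda_S$ is an $\mathcal{N}$-epimorphism, and that is what I intend to establish.

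By the universal property of the pullback in~\eqref{def_inv}, one has $l_S \lambda_S = (\coim b) f$. Both $\coim b = \coker(\ker b)$ and $f$ (by hypothesis) are $\mathcal{N}$-epimorphisms, and $\mathcal{N}$-epimorphisms are closed under composition, so $l_S \lambda_S$ is $\mathcal{N}$-epi. A one-line check shows that if $hk$ is an $\mathcal{N}$-epimorphism then so is $h$: given $\alpha h \in \mathcal{N}$, right-composition with $k$ yields $\alpha h k \in \mathcal{N}$, whence $\alpha \in \mathcal{N}$ because $hk$ is $\mathcal{N}$-epi. Applied with $h = l_S$, this makes $l_S$ an $\mathcal{N}$-epi. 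On the other hand, the pullback-of-kernels lemma stated just before Axiom~\ref{axiom_ex2}, applied to the kernel $\im b$, forces $l_S$ to be a kernel, and in particular an $\mathcal{N}$-monomorphism. A morphism that is simultaneously a kernel and an $\mathcal{N}$-epimorphism must be an isomorphism: writing $l_S = \ker q$, the identity $q l_S \in \mathcal{N}$ combined with the $\mathcal{N}$-epi property of $l_S$ forces $q \in \mathcal{N}$, and the kernel of a null morphism is the identity. Thus $\lambda_S = l_S^{-1}(l_S \lambda_S)$ is $\mathcal{N}$-epi, so $\Img S$ is null.

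The second assertion is the exact dual. If $g$ is $\mathcal{N}$-mono, then $\rho_S r_S = g \cdot \im a$ is a composition of $\mathcal{N}$-monomorphisms (since $\im a$ is always a kernel), hence $\mathcal{N}$-mono; by the dual implication ``$hk$ $\mathcal{N}$-mono $\Rightarrow k$ $\mathcal{N}$-mono'', $r_S$ is $\mathcal{N}$-mono. But $r_S$ is a cokernel by the dual (pushout-of-cokernels) half of the same lemma, and a cokernel that is $\mathcal{N}$-mono is an isomorphism by the dual argument; so $\rho_S$ is $\mathcal{N}$-mono and $\Ker S$ is null. The only point where some care is needed is the step ``kernel $+$ $\mathcal{N}$-epi $=$ iso'' (and its dual), but this uses nothing beyond the ideal structure of $\mathcal{N}$ and the defining property of a kernel, so no serious obstacle arises in a general semiexact category.
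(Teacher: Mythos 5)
Your overall strategy (deduce from $f$ being $\mathcal{N}$-epi that $l_S$ is both a kernel and an $\mathcal{N}$-epimorphism, hence invertible, and then that $\lambda_S$ is $\mathcal{N}$-epi) is the natural transcription of Nomura's argument; the paper itself gives no proof, simply deferring to \cite{No1}. The steps ``$hk$ $\mathcal{N}$-epi $\Rightarrow h$ $\mathcal{N}$-epi'', ``$l_S$ is a kernel by the pullback lemma'', and ``kernel $+$ $\mathcal{N}$-epi $\Rightarrow$ iso'' are all correct. However, there is one genuine gap, and it sits exactly at the step you describe as unproblematic. The comparison morphism $B\to\Img b$ appearing in the pullback of \eqref{def_inv} is not $\coim b$: it is the canonical factorization $b'=\overline{b}\,\coim b$ of $b$ through its image (these coincide only when $b$ is exact). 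Hence $l_S\lambda_S=\overline{b}\,(\coim b)\,f$, and for this to be an $\mathcal{N}$-epimorphism you need $\overline{b}$ to be an $\mathcal{N}$-epimorphism. In the paper's own framework that is precisely the content of Lemma~\ref{lemma_kop_weg}(i) and Corollary~\ref{corr_nbimorph_in_the_middle}, and it requires kernels to be closed under composition (Axiom~\ref{axiom_ex2}); it is not a consequence of semiexactness alone, whereas the lemma is stated for a bare semiexact category. The dual half has the dual gap: the map $\Coim a\to C$ is $(\im a)\overline{a}$, not $\im a$, and you need $\overline{a}$ to be an $\mathcal{N}$-monomorphism, i.e.\ Lemma~\ref{lemma_kop_weg}(ii).

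This is not a defect you can argue around by a cleverer route: taking the degenerate square with $f=\mathrm{id}_B$, $g=\mathrm{id}_D$, $a=b$ (so that $f$ is trivially an $\mathcal{N}$-epimorphism, $M\cong\Img b$ and $\lambda_S=b'$), the first assertion of the lemma reduces literally to ``$\Coker\overline{b}$ is null for every $b$''. So any proof must invoke the $\mathcal{N}$-epi property of $\overline{b}$, and your closing remark that the argument ``uses nothing beyond the ideal structure of $\mathcal{N}$'' is where the proposal overreaches. The fix is to state explicitly that you are using Corollary~\ref{corr_nbimorph_in_the_middle}, i.e.\ to prove the lemma in an ex2-category (which covers every use of it in the paper, since Section~\ref{nomura_null} works in a homological category and Lemma~\ref{Lamb-nul} is applied there), or else to add an exactness hypothesis on $b$ (resp.\ $a$). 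With that hypothesis made explicit, the rest of your argument is correct as written.
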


Consider the diagram of two consecutive squares:
$$
\begin{tikzcd}
    A \ar[r,"f"] \ar[d,"a"'] \ar[r, phantom, shift right=4ex, "S" marking] & B \ar[r,"g"] \ar[d,"b" description] 
\ar[r, phantom, shift right=4ex, "T" marking] & C \ar[d,"c"]\\
    A' \ar[r,"f'"'] & B' \ar[r,"g'"'] & C'
\end{tikzcd}
\eqno{(\ref{maind})}
$$

The following assertion was proved in~\cite{No1} for Puppe exact categories and in~\cite{KoLe2024} for homological categories.

\begin{theorem}\label{lambek_ex_th}
Suppose that in diagram~{\rm \eqref{maind}} in a semiexact category $(\mathcal{C},\mathcal{N})$, 
the morphism $b$ is exact and the compositions $gf$ and $g'f'$ are null.
Then there exists a unique morphism $\Lambda: \Img S \to \Ker T$ such that
\begin{equation}\label{Lambda}
r_T l_S = (\ker \rho_T) \Lambda \coker \lambda_S.
\end{equation}
\end{theorem}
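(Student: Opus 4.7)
My approach is to obtain $\Lambda$ by a three-stage universal-property chase across the pullback $P_S$ that defines $\Img S = \Coker\lambda_S$ and the pushout $P_T$ that defines $\Ker T = \Ker\rho_T$. The exactness of $b$ enters at the outset: $l_S$ lands in $\Img b$ while $r_T$ issues from $\Coim b$, so the identity~\eqref{Lambda} has to be read via the canonical isomorphism $\overline{b}\colon \Coim b \xrightarrow{\cong} \Img b$. Accordingly I would set
\[
\theta \;:=\; r_T\,\overline{b}^{-1}\,l_S \;:\; P_S \longrightarrow P_T
\]
and aim to display $\theta = (\ker\rho_T)\,\Lambda\,\coker\lambda_S$.

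For the first reduction, I need $\rho_T\theta$ to be null. Writing $m_S\colon P_S \to \Img f'$ for the second pullback projection (so that $(\im b)\,l_S = (\im f')\,m_S$) and using the identity $\rho_T\, r_T = g'\,(\im b)\,\overline{b}$ forced by the pushout universal property, one gets
\[
\rho_T\theta \;=\; g'\,(\im b)\,l_S \;=\; g'\,(\im f')\,m_S.
\]
The assumption $g' f' \in \mathcal{N}$ factors $g'$ through $\coker f'$, and since $\im f' = \ker(\coker f')$ we have $(\coker f')(\im f') \in \mathcal{N}$; hence $g'(\im f') \in \mathcal{N}$ and $\rho_T\theta \in \mathcal{N}$. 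The universal property of $\ker\rho_T$ then yields a unique $\theta'\colon P_S \to \Ker T$ with $\theta = (\ker\rho_T)\,\theta'$.

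For the second reduction, I would show $\theta'\lambda_S \in \mathcal{N}$; because $\ker\rho_T$ is an $\mathcal{N}$-monomorphism, it suffices to check $\theta\lambda_S \in \mathcal{N}$. The pullback universal property gives $l_S\lambda_S = \overline{b}\,(\coim b)\,f$, and if $s_T\colon \Coim g \to P_T$ denotes the other pushout injection then $r_T\,\coim b = s_T\,\coim g$, so
\[
\theta\lambda_S \;=\; r_T\,(\coim b)\,f \;=\; s_T\,(\coim g)\,f.
\]
Dually to the previous step, $gf \in \mathcal{N}$ lets $f$ factor through $\ker g$, and $(\coim g)(\ker g) \in \mathcal{N}$ because $\coim g = \coker(\ker g)$; hence $(\coim g)\,f \in \mathcal{N}$, so $\theta\lambda_S \in \mathcal{N}$. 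The universal property of $\coker\lambda_S$ then yields the unique $\Lambda\colon \Img S \to \Ker T$ with $\theta' = \Lambda\,\coker\lambda_S$, and composing the two factorizations produces~\eqref{Lambda}.

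Uniqueness of $\Lambda$ is automatic from the stage-by-stage universal properties. The only place I use exactness of $b$ is to form $\theta$ via $\overline{b}^{-1}$; every other step rests on the semiexact-category identities $(\coker f')(\im f') \in \mathcal{N}$ and $(\coim g)(\ker g) \in \mathcal{N}$ together with the universal properties of kernels, cokernels, pullbacks, and pushouts. The main bookkeeping hazard, and the step I would double-check, is correctly expressing $l_S\lambda_S$, $\rho_T r_T$, $r_T\coim b$, and $s_T\coim g$ in terms of the normal factorizations of $b$, $g$, $f$, $f'$.
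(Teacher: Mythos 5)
Your proposal is correct and follows essentially the same route as the paper, which (citing \cite{KoLe2024}) recalls exactly this construction in diagram~\eqref{diagram2}: identify $\Coim b$ with $\Img b$ via $\overline{b}$, compose the pullback projection $l_S$ with the pushout injection $r_T$, and factor the result through $\ker\rho_T$ and $\coker\lambda_S$ using the two null-checks coming from $g'f'\in\mathcal{N}$ and $gf\in\mathcal{N}$. Your two computations $\rho_T\theta=g'(\im f')m_S\in\mathcal{N}$ and $\theta\lambda_S=s_T(\coim g)f\in\mathcal{N}$, together with the $\mathcal{N}$-monicity of $\ker\rho_T$ and the uniqueness clauses of the universal properties, are precisely the details the paper leaves to the cited reference.
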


 For brevity, we put $\lambda := \lambda_S$, $l := l_S$, $r := r_T$, and $\rho := \rho_T$.
    Since $b$ is exact, it can be presented as $b = (\im b) \coim b$.
We have the~commutative diagram
    \begin{equation}\label{diagram2}
    \begin{tikzcd}[column sep=large]
        A \ar[rr,"f"] \ar[dd,"a"'] \ar[rd,"\lambda"',dashed] & & B \ar[r,"\coim g",two heads] 
\ar[d,"\coim b"',two heads] & \Coim g \ar[r,"(\im g) \overline{g}"] \ar[d,"t",two heads] & C \ar[dd,"c"]\\
        & M \ar[r, phantom, shift right=4ex, "{\rm PB}" marking] \ar[r,"l",tail] \ar[d,"s"', tail] & \bullet 
\ar[d,"\im b",tail] \ar[r,"r"',two heads] \ar[r, phantom, shift left=4ex, "{\rm PO}" marking] & N 
\ar[rd,"\rho",dashed] & \\
        A' \ar[r,"\overline{f'} (\coim f')"'] & \Img f' \ar[r,"\im f'"',tail] & B' 
\ar[rr,"g'"'] & & C'
    \end{tikzcd}
    \end{equation}
  
We refer to the morphism $\Lambda= \Lambda_{ST} : \Img S \to \Ker T$ constructed 
in Theorem~\ref{lambek_ex_th} as the \emph{Lambek morphism} of the squares $S$ and $T$.

Lemma~\ref{ep-mn-triv} and~\eqref{Lambda} readily imply

\begin{lemma}\label{Lamb-nul}
Suppose that in diagram~{\rm \eqref{maind}} in a semiexact category $(\mathcal{C},\mathcal{N})$, 
the morphism $b$ is exact and, the compositions $gf$ and $g'f'$ are null, and $f$ is an~$\mathcal{N}$-epimorphism or $g'$ is an~$\mathcal{N}$-monomorphism.
Then the~morphism $\Lambda:\Img S\to \Ker T$ is null.
\end{lemma}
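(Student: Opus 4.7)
The plan is to reduce the claim immediately to Lemma~\ref{ep-mn-triv} by observing that $\Lambda$ is forced to be null whenever its domain or codomain is a null object. First, I would note that in diagram~\eqref{maind} the square $S$ has top morphism $f$, while the square $T$ has bottom morphism $g'$. Hence Lemma~\ref{ep-mn-triv} gives two implications relevant to our hypothesis: if $f$ is an $\mathcal{N}$-epimorphism, then $\Img S$ is a null object; and if $g'$ is an $\mathcal{N}$-monomorphism, then $\Ker T$ is a null object.

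Second, I would invoke the following elementary fact about categories with null morphisms: if $X$ is a null object, i.e.\ $\mathrm{id}_X \in \mathcal{N}$, then every morphism $\varphi : X \to Y$ and every morphism $\psi : Z \to X$ is null, since $\varphi = \varphi \circ \mathrm{id}_X$ and $\psi = \mathrm{id}_X \circ \psi$, and $\mathcal{N}$ is closed under composition with arbitrary morphisms on either side. Applying this to $\Lambda : \Img S \to \Ker T$: in the first case $\Img S$ is a null object, so $\Lambda = \Lambda \circ \mathrm{id}_{\Img S} \in \mathcal{N}$; in the second case $\Ker T$ is a null object, so $\Lambda = \mathrm{id}_{\Ker T}\circ \Lambda \in \mathcal{N}$.

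There is essentially no obstacle here: Theorem~\ref{lambek_ex_th} guarantees the existence of $\Lambda$ under the stated hypotheses (exactness of $b$ and both rows being null), and the additional assumption on $f$ or $g'$ serves only to trigger the vanishing statement of Lemma~\ref{ep-mn-triv}. The argument does not even need to examine the defining equation~\eqref{Lambda}; the conclusion follows purely from the ideal axiom on $\mathcal{N}$ together with the null-object characterisation of $\Img S$ or $\Ker T$.
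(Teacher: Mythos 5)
Your argument is correct and is essentially the paper's: the paper states that the lemma follows readily from Lemma~\ref{ep-mn-triv} (which yields that $\Img S$, resp.\ $\Ker T$, is a null object under the hypothesis on $f$, resp.\ $g'$) together with the defining equation of $\Lambda$, and your observation that any morphism with null domain or codomain lies in the ideal $\mathcal{N}$ is exactly the intended final step.
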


The~following theorem was proved for Puppe exact categories in~\cite{No1} and
for homological categories in~\cite{KoLe2024}.

\begin{theorem}[Lambek's Isomorphism Theorem]\label{lam_is}
    Suppose that in diagram~{\rm \eqref{maind}} in a homological category $(\mathcal{C},\mathcal{N})$,
 $b$ is an exact morphism and the rows are exact.
    Then the Lambek morphism $\Lambda: \Img S \to \Ker T$ is an isomorphism.
\end{theorem}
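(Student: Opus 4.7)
The plan is to exhibit $\Lambda$ as the canonical isomorphism $\overline{rl}$ coming from the normal decomposition of the morphism $rl := r_T\,l_S : M \to N$ in diagram~\eqref{diagram2}. By the defining identity~\eqref{Lambda}, $rl=(\ker\rho_T)\,\Lambda\,(\coker\lambda_S)$; comparing with the canonical decomposition $rl=(\im rl)\,\overline{rl}\,(\coim rl)$, once we establish (a) $\coim(rl)=\coker\lambda_S$, (b) $\im(rl)=\ker\rho_T$, and (c) $rl$ is exact, the uniqueness of the canonical decomposition in an ex2-category forces $\Lambda=\overline{rl}$, which is then an isomorphism.

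The first step is to reformulate $l_S$ and $r_T$ using row exactness. Bottom-row exactness ($\im f'=\ker g'$) identifies the pullback $M$ with $\Ker(g'\im b)$ and $l_S$ with $\ker(g'\im b)$, since the pullback of a morphism along a kernel is the kernel of the composition. Dually, top-row exactness ($\coker f=\coim g$) identifies $N$ with $\Coker((\coim b)\cdot f)$ and $r_T$ with $\coker((\coim b)\cdot f)$.

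The second step is to apply Axiom~\ref{axiom_ex3} to the kernel $l_S$ and the cokernel $r_T$. The required nullness $(\coker l_S)(\ker r_T)=\coim(g'\im b)\cdot\im((\coim b)\cdot f)\in\mathcal{N}$ follows from the identity $(g'\im b)((\coim b)\cdot f)=g'bf=g'f'a\in\mathcal{N}$, after cancelling the $\mathcal{N}$-monomorphism $\im(g'\im b)\cdot\overline{g'\im b}$ on the left and the $\mathcal{N}$-epimorphism $\overline{(\coim b)f}\cdot\coim((\coim b)f)$ on the right, both provided by Corollary~\ref{corr_nbimorph_in_the_middle}. The axiom then produces a cokernel $\pi:M\twoheadrightarrow H$ and a kernel $\iota:H\rightarrowtail N$ with $\iota\pi=rl$; this cokernel--kernel factorization forces $rl$ to be exact with $\pi=\coim(rl)$ and $\iota=\im(rl)$, establishing (c).

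For (a) and (b), the commutativity of diagram~\eqref{diagram2} gives $l_S\,\lambda_S=(\coim b)\cdot f$, whence $rl\,\lambda_S=r_T((\coim b)\cdot f)=0$ yields $\im\lambda_S\subseteq\ker(rl)$ by the universal property of the kernel. The converse inclusion follows by noting that $l_S\,\ker(rl)$ factors through $\ker r_T=\im((\coim b)\cdot f)=l_S\,\im\lambda_S$ (using that $l_S$, being an $\mathcal{N}$-monomorphism, preserves images in the relevant sense) and then cancelling $l_S$. Hence $\ker(rl)=\im\lambda_S$, so $\coim(rl)=\coker\lambda_S$; the dual argument, using the identification of $r_T$ as a cokernel and the pushout data defining $N$, yields $\im(rl)=\ker\rho_T$. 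The main obstacle is this last bookkeeping: matching the morphisms produced by Axiom~\ref{axiom_ex3} with the specific cokernel $\coker\lambda_S$ and kernel $\ker\rho_T$ from the Lambek construction requires careful tracking of the pullback/pushout universal properties defining $M$ and $N$ and repeated use of the $\mathcal{N}$-bimorphism structure of $\overline{\,\cdot\,}$ afforded by the ex2 hypothesis.
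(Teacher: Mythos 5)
Your argument is essentially correct, but note first that the paper does not actually prove Theorem~\ref{lam_is}: it is quoted with a citation to \cite{KoLe2024}, so there is no in-paper proof to compare against. Your route --- use row exactness and the pullback/pushout lemma to identify $l_S=\ker(g'\im b)$ and $r_T=\coker((\coim b)f)$, feed this kernel--cokernel pair into Axiom~\ref{axiom_ex3} (the nullness hypothesis coming from $g'bf=g'f'a\in\mathcal{N}$ after cancelling the outer $\mathcal{N}$-mono and $\mathcal{N}$-epi factors), and then match the resulting exact factorization of $r_Tl_S$ with the defining identity~\eqref{Lambda} so that $\Lambda=\overline{r_Tl_S}$ --- is sound and self-contained given the lemmas already stated in the paper. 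Two small points deserve tightening. First, the identity $\im(l_S\lambda_S)=l_S\,\im\lambda_S$ is not a consequence of $l_S$ being an $\mathcal{N}$-monomorphism (the canonical bimorphism $\overline{u}$ shows that $\mathcal{N}$-monomorphisms do not ``preserve images'' in general); what you actually need is that $l_S$ is a \emph{kernel}, so that $l_S\,\im\lambda_S$ is again a kernel by Axiom~\ref{axiom_ex2}, whence $\im(l_S\lambda_S)=\ker\coker\bigl(l_S\,\im\lambda_S\cdot\overline{\lambda_S}\coim\lambda_S\bigr)=\ker\coker(l_S\,\im\lambda_S)=l_S\,\im\lambda_S$ by Corollary~\ref{corr_nbimorph_in_the_middle} and Lemma~\ref{lemma_n_mono_ker_null_and_dual}. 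Second, ``cancelling $l_S$'' requires $l_S$ to be a monomorphism in the ordinary categorical sense, not merely an $\mathcal{N}$-monomorphism; this does hold, because the uniqueness clause in the universal property of a kernel makes every kernel monic, but it should be stated since the two notions are distinct in a semiexact category. With these repairs the proof is complete.
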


\section{Nomura's Null Sequences}\label{nomura_null}

Throughout the~section, the~ambient category is a~fixed homological category $(\mathcal{C},\mathcal{N})$.
\medskip

Consider the~commutative~diagram of two consecutive squares
    $$
    \begin{tikzcd}
        A \ar[r,"f"] \ar[d,"a"'] \ar[r, phantom, shift right=4ex, "S" marking] & B \ar[r,"g"] \ar[d,"b" description] 
\ar[r, phantom, shift right=4ex, "T" marking] & C \ar[d,"c"]\\
        A' \ar[r,"f'"'] & B' \ar[r,"g'"'] & C'
   \end{tikzcd}
  \eqno{(\ref{maind})}
  $$
with null rows, where $b$ is an exact morphism. 

We will now construct the~first of Nomura's sequences, which is in general just a~null sequence.

We put $H=H\bigl(A \overset{f}{\to} B \overset{g} \to C)$ and $H'=H\bigl(A' \overset{f'}{\to} B' \overset{g'} \to C')$,
where $H(\to\cdot\to)$ stands for the~homology of the~null sequence in parentheses. It is easy to see
that the~vertical arrows in~\eqref{maind} induce a~morphism $h: H\to H'$. 

Indeed, we obtain the~commutative diagram
\begin{equation}\label{square-hom}
\begin{tikzcd}
{\Img f} \arrow[r, "r"] \arrow[d, "\widehat{\widehat{b}}"'] & {\Ker g} \arrow[d, "\widehat{b}"] \\
{\Img f'} \arrow[r,"r' "]                                         & \Ker g'
\end{tikzcd}
\end{equation}
where $r:\Img f \to \Ker g$ and $r':\Img f' \to \Ker g'$ are the~unique morphisms such that $\im f=(\ker g)r$ and $\im f'=(\ker g')r'$; moreover, as above, $\widehat{b}:\Ker g\to \Ker g'$ is the~morphism of the~kernels of the~rows of the~square~$T$ and $\widehat{\widehat{b}}:\Img f\to \Img f'$ is the~morphism
of the~kernels of the~rows of the~square
$$
\begin{tikzcd}
B \arrow[r, "\coker f"] \arrow[d, "b"'] & \Coker f \arrow[d, "\widecheck{b}"]   \\
B' \arrow[r, "\coker f'"']              & \Coker f' \arrow[l, phantom]
\end{tikzcd}
$$
The~natural morphism $h:H\to H'$ of the~homologies of the~rows in~\eqref{maind} is the~morphism of the~cokernels of the~rows in~\eqref{square-hom}:
$$
\begin{tikzcd}[column sep=large]
{\Img f} \arrow[r, "r"] \arrow[d, "\widehat{\widehat{b}}"'] & {\Ker g} \arrow[d, "\widehat{b}"] \arrow[r, "\coker r"] & \Coker r = H \arrow[d, "h"] \\
{\Img f'} \arrow[r, "r'"']                                         & \Ker g' \arrow[r, "\coker r'"']                     & \Coker r'=H'
\end{tikzcd}
$$
or, equivalently, $h$ is the~morphism of the~cokernels in the~diagram
$$
\begin{tikzcd}[column sep= 70pt]
A \arrow[r, "r\overline{f}\coim f"] \arrow[d, "b"'] & \Ker g \arrow[d, "\widehat{b}"] \arrow[r, "\coker(r\overline{f}\coim f)"] & \Coker(r\overline{f}\coim f)= H \arrow[d, "h"] \\
A' \arrow[r, "r'"']                                         & \Ker g' \arrow[r, "\coker(r'\overline{f'}\coim f')"']                     & \Coker(r'\overline{f'}\coim f')=H'
\end{tikzcd}
$$

Another (dual) description of the~morphism $h:H\to H'$ can be obtained if we regard $H$ and $H'$ as the~kernels of the~natural morphisms $\nu_0:\Coker f\to \Coim g$ (equivalently, of $(\im g)\, \overline{g}\nu_0:\Coker f\to C$) and $\nu'_0:\Coker f'\to \Coim g'$ (equivalently, of $(\im g')\, \overline{g'}\nu'_0:\Coker f'\to C'$) respectively.

We put $\nu:=\overline{g}\nu_0$ and $\nu':=\overline{g}\nu'_0$.

Let $\widecheck{b}: \Coker f \to \Coker f'$ be the~morphism of the~cokernels of the~rows 
in~$S$ induced by $b$
and $\eta=\widehat{(\im g)\nu}:\Ker \widecheck{b}\to \Ker c$
be the~morphism of the~kernels in~$S$ induced by $(\im g)\nu$.
Hence the following diagram commutes:
$$
\begin{tikzcd}
& \Ker \widecheck{b} \ar[d,tail,"\ker \widecheck{b}"']
\ar[r,"\eta"] & \Ker c \ar[d,"\ker c",tail]\\
H \ar[r,tail] \ar[d,"h"'] &
\Coker f \arrow[r, "{(\im g)\nu}"] \arrow[d, "\widecheck{b}"'] & C \arrow[d, "c"] \\
H' \ar[r,tail] &
\Coker f' \arrow[r, "(\im g')\nu'"']                                        & C'              
\end{tikzcd}
$$
Then, by Lemma~\ref{lemma_three_by_three}(i), we have $\Ker h = \Ker \eta$.

Denote by~$\xi:=\widehat{\coker f}$
the~morphism of the~kernels of the~columns of the~square $S_1$
induced by $\coker f$:
$$
\begin{tikzcd}
\Ker b \ar[r,"\xi"] \ar[d,tail,"\ker b"'] &
\Ker \widecheck{b} \ar[d,tail,"\ker \widecheck{b}"] \\
B \arrow[r, "\coker f"] \arrow[d, "b"'] \ar[r, phantom, shift right=4ex, "S_1" marking]  
& \Coker f \arrow[d, "\widecheck{b}"]  
\\
B' \arrow[r, "\coker f'"']   &  \Coker f'              
\end{tikzcd}
$$

By the~Composition Lemma applied to the~composition $bf$, we have the~null sequence
\begin{equation}\label{null-bf}
\Ker(bf) \xrightarrow{k} \Ker b \xrightarrow{(\coker f)\ker b}
\Coker f,
\end{equation}
where $k$ is defined by the~equality $(\ker b)k = f\ker (bf)$, which gives a~morphism
$$
k_0: \Ker (bf) \rightarrow \Ker((\coker f) \ker b) = 
\Ker \xi.
$$
Applying the~Composition Lemma to the~composition $\eta\xi$, we get the~null sequence
\begin{equation}\label{seq-etaxi}
\Ker \xi \overset{\mu}{\to} \Ker(\eta\xi) \overset{v}{\to} \Ker \eta
\xrightarrow{(\coker \xi)\ker \eta}  \Coker \xi 
\overset{\rho}{\to} \Coker(\eta\xi).
\end{equation}

We assert that
\begin{equation}\label{short-null}
\Ker(bf) \overset{k}{\to} \Ker b \overset{\eta\xi}{\to} \Ker c
\end{equation}
is a~null sequence, that is, $\eta\xi k$ is null. Indeed, we infer
$$
(\ker\,c)\eta\xi k = (\im g) \nu (\ker \widecheck{b})\xi k 
= (\im g) \nu (\coker f)(\ker b)k 
= (\im g) \nu (\coker f) f(\ker(bf))b \in \mathcal{N}.
$$
Since $\ker c$ is an~$\mathcal{N}$-monomorphism, this gives $\eta\xi k\in\mathcal{N}$.
Consider the~homology
$$
\widehat{H}  = H (\Ker (bf) \overset{k}{\to} \Ker b \overset{\eta\xi}{\to}
\Ker c)
$$
By what was said above, we have the~morphism 
$\mu k_0: \Ker(bf) \to \Ker(\eta\xi)$, and $v\mu k_0$ is null.
This gives a~unique morphism
$$
\widehat{H} = \Coker(\mu k_0) \overset{\alpha}{\longrightarrow} \Ker \eta= \Ker h
$$
such that $v=\alpha \coker(\mu k_0)$. Furthermore, considering the~diagram
\begin{equation}
\label{diagram_xi_eta}
\begin{tikzcd}[
    row sep=large,
    column sep=large
    ]
\Ker A 
\ar[r, phantom, shift right=5ex, "S_0" marking]
\ar[d,tail,"\ker a"'] \ar[r,"\widehat{f}"]
& \Ker b \ar[r, phantom, shift right=5ex, "S_2" marking] \ar[d,tail,"\ker b"description] \ar[r,"\xi"] & \Ker \widecheck{b} \ar[r, phantom, shift right=5ex, "S_3" marking] \ar[d,tail,"\ker \widecheck{b}"description] \ar[r,"\eta"] & \Ker c \ar[d,tail,"\ker c"] \\
A \ar[r, phantom, shift right=5ex, "S" marking] \ar[d,"a"'] \ar[r,"f"description] & B \ar[r, phantom, shift right=5ex, "S_1" marking] \ar[d,"b"description] \ar[r,"\coker f"description,two heads] & \Coker f \ar[d,"\widecheck{b}"] \ar[r,"(\im g)\nu"'] & C \\
\
A' \ar[r,"f'"'] & B' \ar[r,"\coker f'"',two heads] & \Coker f'
\end{tikzcd}
\end{equation}
we have
\begin{equation}\label{identif}
\begin{tikzcd}
\Img S \ar[r,"\cong"',"\Lambda_{S S_1}"] &
\Ker S_1 \ar[r,"\cong"',"\Lambda_{S_2 S_1}^{-1}"] &
\Img S_2 = \Coker \xi.
\end{tikzcd}
\end{equation}
(Here $\Lambda_{AB}$ is the~Lambek (iso)morphism from $\Img A$ into $\Ker B$.)
Since 
$$
(\coker \xi)(\ker\eta)v = (\coker \xi)(\ker\eta)\,\alpha \coker(\mu k_0)  \in \mathcal{N}
$$
and $\coker(\mu k_0)$ is a~cokernel, 
$(\coker \xi)(\ker\eta)\alpha$ is also null, and thus we may assume that 
\[
\beta:=(\coker \xi)\ker\eta:\Ker h \to
\Img S.
\]

Observe that, modulo~\eqref{identif}, the~morphism $\Lambda(\coker \xi)\ker \eta: \Ker \eta \to \Ker T$  is null that is, the~composition
$\Lambda{\Lambda_{S S_1}^{-1}}\Lambda_{S_2 S_1}(\coker \xi) \ker \eta : \Ker \eta \to \Ker T$ is null.  Consider the pushouts
$$
\begin{tikzcd}
\ar[r, phantom, shift right=4ex, "{\rm PO}" marking]
B \arrow[r, "\coker f", two heads] \arrow[d, "\coim b"', two heads] & \Coker f \arrow[d, "\varphi_0", two heads] \\
{\Img b} \arrow[r, "t"', two heads]                                             & V                                                    
\end{tikzcd}
\quad\quad\quad\quad
\text{and}
\quad\quad\quad\quad
\begin{tikzcd}
\ar[r, phantom, shift right=4ex, "{\rm PO}" marking]
B \arrow[r, "\coim g", two heads] \arrow[d, "\coim b"', two heads] & \Coim g \arrow[d, "\varphi", two heads] \\
\Coim b \arrow[r, "s"', two heads]                                            & W                                                  
\end{tikzcd}
$$
of the pairs $(\coker f,\coim b)$
and $(\coim g,\coim b)$.
From the universal property of a pushout there exist unique morphisms $\zeta: V \to W$, 
$\rho_0: V \to \Coker f'$, and $\rho: W \to C'$
that make the following diagram commute:
\[
\begin{tikzcd}[
    row sep=large,
    column sep=large
    ]
B \ar[r, phantom, shift right=5ex, "{\rm PO}" marking] \ar[dd,"b"',bend right=60] \arrow[r, "\coker f", two heads] \arrow[d, "\coim b"'{near end}, two heads] \arrow[rr, "{\textrm{coim}\,g}", two heads, bend left, shift right] & \textrm{Coker}\,f
\arrow[r, "\nu"] \arrow[d, "\varphi_0", two heads] \arrow[dd, "\widecheck{b}"'{near start},bend right=40] & {\textrm{Coim}\, g} \arrow[r, "\im g"] \arrow[d, "\varphi"] & C \arrow[dd, "c"] \\
{\Img b} \arrow[d, "{\im b}"'{near start}, tail] \arrow[r, "t", two heads,crossing over] \arrow[rr, "s"'{near end}, bend right=15, shift right,crossing over]                                                       & V \arrow[d, "\rho_0"{near end},dashed,crossing over] \arrow[r,"\zeta",dashed]                                                                  & W \arrow[rd, "\rho",dashed]                                                           &                   \\
B' \arrow[r, "\coker f' " ', two heads]                                                     & \Coker f' \arrow[rr, "(\im g')\nu'"',dashed]                                                                        &                                                                                & C'               
\end{tikzcd}
\]
Consider the morphism of kernels
$$
\Ker S_1  = \Ker \rho_0 \overset{\widehat{\zeta}}{\longrightarrow} \Ker \rho
= \Ker T
$$
induced by the morphism $\zeta : V \to W$. By Lambek's Isomorphism Theorem, $\Lambda_{S S_1}:\Img S \cong \Ker S_1$. Due to the~naturality of the~Lambek morphism, we have $\Lambda= \widehat{\zeta}\Lambda_{S S_1}$. We infer
\begin{align*}
(\ker \rho)\,\Lambda{\Lambda_{S S_1}^{-1}}\Lambda_{S_2 S_1}(\coker \xi) \ker \eta 
&= (\ker \rho)\,\widehat{\zeta}\,\Lambda_{S_2,S_1}\,
(\coker \xi) \ker \eta \\
&= \zeta\, \varphi_0\, (\ker \widecheck{b})\,
\ker \eta \\
&= \varphi\, \nu\, (\ker \widecheck{b})\,
\ker \eta \\
&= \varphi\,\nu\, (\ker \widecheck{b})\,
\left(\ker \left( (\im g)\nu \ker \widecheck{b}
\right)\right) \\ 
&= \varphi\,\nu\, (\ker \widecheck{b})\,
\left(\ker \left( \nu \ker \widecheck{b}
\right)\right) \in \mathcal{N}.
\end{align*}
Therefore, since $\ker \rho$ is an~$\mathcal{N}$-monomorphism,
the~composition 
$\Lambda{\Lambda_{S S_1}^{-1}}\Lambda_{S_2 S_1}(\coker \xi) \ker \eta$ 
is null, q.e.d. 

Thus, we have constructed the null sequence
\[
\begin{tikzcd}
H\left( \Ker(bf) \to \Ker b \to \Ker c \right) \ar[r,"\alpha"] & \Ker\left( H \to H' \right) \ar[r, "\beta"] & \Img S \ar[r,"\Lambda"] &
\Ker T\,.
\end{tikzcd}
\]

Dually, there exist morphisms
$\eta':\Coker a\to\Coker \widehat{b}$
and
$\xi':\Coker \widehat{b}\to\Coker b$
which make the diagram 
\begin{equation}
\label{diag_with_T_0}
\begin{tikzcd}[
    row sep=large,
    column sep=large
    ]
&
\Ker g \ar[r,"\ker g"]
\ar[d,"\widehat{b}"'] &
B \ar[r,"g"] \ar[d,"b"] &
C \ar[d,"c"]
\\
A' \ar[d,"\coker a"'] \ar[r] &
\Ker g'
\ar[d,"\coker \widehat{b}"']
\ar[r,"\ker g'",tail] &
B'
\ar[r, phantom, shift right=5ex, "T_0" marking]
\ar[d,"\coker b"',two heads]
\ar[r,"g'"] & C'
\ar[d,"\coker c",two heads] 
\\
\Coker a \ar[r,"\eta'"'] &
\Coker \widehat{b}
\ar[r,"\xi'"',two heads] &
\Coker b  \ar[r,"\widecheck{g'}"'] 
& \Coker c
\end{tikzcd}
\end{equation}
commute. Here $\widehat{b}:\Ker g\to \Ker g'$ is the~natural morphism such that $(\ker\,g')\widehat{b}=b\ker\,g$.
Hence, there exist morphisms $\alpha'$ and
$\beta'$ that complete Nomura's
null-sequence 
$$
\begin{tikzcd}[column sep=small]
     & H\left( \Ker(bf) \to \Ker b \to \Ker c \right) \ar[r,"\alpha"] \ar[d, phantom, ""{coordinate, name=Z}] & \Ker\left( H \to H' \right) \ar[r, "\beta"] & \Img S 
    \ar[
        dll,
        "\Lambda"',
        rounded corners,
        to path={ -- ([xshift=2ex]\tikztostart.east)
        |- (Z) [near end]\tikztonodes
        -| ([xshift=-2ex]\tikztotarget.west) 
        -- (\tikztotarget)}
    ]\\
    & \Ker T \ar[r,"\beta'"'] & \Coker\left( H \to H' \right) \ar[r,"\alpha'"'] & H\left( \Coker a \to \Coker b \to \Coker(g' b) \right)
\end{tikzcd}
\eqno{(\ref{diagram_nomura_exact_seq})}
$$
in a~homological category.

\medskip

We have

\begin{theorem}\label{th_exact_1}
The~following exactness properties hold for diagram~\eqref{diagram_nomura_exact_seq} with the~morphism~$b$ exact:

(i) If $f$ is exact then $\alpha$ is an $\mathcal{N}$-monomorphism.
If in~addition $(\coker f)\ker b$ is exact then
 $\alpha=\ker\beta$ and sequence~\eqref{diagram_nomura_exact_seq}
is exact at~$\Ker(H\to H')$.
                   
(ii) If $g$ and $\eta$ are exact then \eqref{diagram_nomura_exact_seq} is exact at~$\Img S$
and $\Lambda$ is an~exact morphism.

Dually,

(i\,$'$) If $g'$ is exact then $\alpha'$ is an $\mathcal{N}$-epimorphism.
If in~addition $(\coker b)\ker g'$ is exact then
 $\alpha'=\coker \beta'$ and sequence~\eqref{diagram_nomura_exact_seq}
is exact at~$\Coker(H\to H')$.
                   
(ii\,$'$) If $f'$ and $\eta'$ are exact then \eqref{diagram_nomura_exact_seq} is exact at~$\Ker T$
and $\Lambda$ is an~exact morphism.
\end{theorem}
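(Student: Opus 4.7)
The plan is to reduce everything to applications of the Composition Lemma~\ref{composition_lemma} (to the compositions $bf$ and $\eta\xi$) combined with the construction of $\alpha$, $\beta$, and $\Lambda$ from the preceding paragraphs. A key preliminary observation is the identification $\widehat g = \eta\xi$ as morphisms $\Ker b \to \Ker c$: computing
\[
(\ker c)\,\eta\xi = (\im g)\,\nu\,(\ker \widecheck b)\,\xi = (\im g)\,\nu\,(\coker f)(\ker b) = g\,(\ker b) = (\ker c)\,\widehat g
\]
and cancelling the $\mathcal{N}$-monomorphism $\ker c$ gives $\widehat g = \eta\xi$. The Composition Lemma applied to $\widehat g = \eta\xi$ then provides the null-sequence
\[
\Ker \xi \xrightarrow{\mu} \Ker \widehat g \xrightarrow{v} \Ker \eta \xrightarrow{\beta} \Coker \xi \xrightarrow{\varepsilon} \Coker \widehat g \xrightarrow{\omega} \Coker \eta,
\]
always exact at $\Ker \widehat g$, $\Coker \widehat g$, $\Coker \eta$, with additional exactness at $\Ker\eta$ (resp.\ $\Coker\xi$) and $v$ (resp.\ $\varepsilon$) exact whenever $\xi$ (resp.\ $\eta$) is exact.

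For part~(i), apply the Composition Lemma also to $bf$. When $f$ is exact, the lemma yields exactness at $\Ker b$, so $\im k = \Ker\xi$, and $k$ itself is exact. Hence the canonical factorization $k_0\colon \Ker(bf)\to\Ker\xi$ of $k$ through $\ker\xi$ is (up to iso) $\overline k\,\coim k$ with $\overline k$ iso, so $k_0$ is an $\mathcal{N}$-epimorphism. Exactness at $\Ker\widehat g$ gives $\im\mu=\ker v$, hence $\coker\mu=\coim v$; and Lemma~\ref{lemma_n_mono_ker_null_and_dual} applied to the $\mathcal{N}$-epi $k_0$ yields $\coker(\mu k_0)=\coker\mu=\coim v$. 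The defining relation $v=\alpha\coker(\mu k_0)$ then forces $\alpha=(\im v)\overline v$, which is an $\mathcal{N}$-monomorphism by Corollary~\ref{corr_nbimorph_in_the_middle}. If $(\coker f)\ker b$ is additionally exact, the factorization $(\coker f)\ker b=(\ker\widecheck b)(\im\xi)\cdot\overline\xi\cdot\coim\xi$, with $(\ker\widecheck b)(\im\xi)$ a kernel and $\coim\xi$ a cokernel by the ex2-axiom, is the canonical decomposition up to iso, so $\xi$ is exact. The Composition Lemma for $\eta\xi$ then also yields exactness at $\Ker\eta$ (so $\im v=\ker\beta$) and $\overline v$ iso, whence $\alpha$ coincides with $\ker\beta$ as subobjects, giving exactness of~\eqref{diagram_nomura_exact_seq} at $\Ker(H\to H')$. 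Part~(i$'$) is entirely dual.

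For part~(ii), when $\eta$ is exact the Composition Lemma gives exactness at $\Coker\xi$, i.e.\ $\im\beta=\ker\varepsilon$, and $\varepsilon$ is exact. The inclusion $\ker\varepsilon\subseteq\ker\Lambda$ is immediate from $\Lambda\beta\in\mathcal{N}$ together with the fact that $\overline\beta\coim\beta$ is an $\mathcal{N}$-epimorphism. For the reverse inclusion, the plan is to construct a kernel $\zeta'\colon \Coker\widehat g\to \Ker T$ satisfying $\Lambda=\zeta'\varepsilon$: exploiting the exactness of $g$ to replace $\Coim g$ by $\Img g$ via the iso $\overline g$ in the pushout $W$ defining $\Ker T$, one matches the pushout presentation $\Lambda=\widehat\zeta\,\Lambda_{SS_1}$ with the cokernel presentation of $\Coker\widehat g$, producing $\zeta'$ as a kernel. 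Once $\Lambda=\zeta'\varepsilon$ with $\zeta'$ a kernel, Lemma~\ref{lemma_n_mono_ker_null_and_dual} yields $\ker\Lambda=\ker\varepsilon=\im\beta$, giving exactness at $\Img S$; moreover $\zeta'(\im\varepsilon)$ is then a kernel (again by the ex2-axiom), so $\Lambda=\zeta'(\im\varepsilon)\cdot\overline\varepsilon\cdot\coim\varepsilon$ is the canonical factorization of an exact morphism. Part~(ii$'$) is dual. The main obstacle is the construction of $\zeta'$ as a kernel, which requires careful diagram chasing through the pushout presentation of $\Lambda$ and leverages the exactness of $g$ to identify $\Coim g$ with $\Img g$.
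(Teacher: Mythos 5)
Your part (i) (and its dual (i$'$)) is essentially the paper's own argument: exactness of $f$ makes $k$ exact via the Composition Lemma applied to $bf$, so $k_0=\coim k$ is an $\mathcal{N}$-epimorphism, whence $\coker(\mu k_0)=\coker\mu=\coim v$ and $\alpha=(\im v)\overline{v}$ is $\mathcal{N}$-monic; the additional hypothesis that $(\coker f)\ker b$ is exact forces $\xi$ to be exact and upgrades this to $\alpha=\ker\beta$. Your identification $\widehat{g}=\eta\xi$ and the ex2-argument for ``$(\coker f)\ker b$ exact $\Rightarrow\xi$ exact'' are both correct and consistent with the paper.

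Part (ii), however, has a genuine gap. You propose to establish $\Ker\Lambda=\im\beta$ by constructing a \emph{kernel} $\zeta'\colon\Coker\widehat{g}\to\Ker T$ with $\Lambda=\zeta'\varepsilon$, where your $\varepsilon$ is the map $\Coker\xi\to\Coker(\eta\xi)=\Coker\widehat{g}$ (the paper's $\rho$). No such $\zeta'$ exists in general: already for abelian groups one has $\Coker\widehat{g}\cong\Ker c/g(\Ker b)$ while $\Ker T\cong(\Img g\cap\Ker c)/g(\Ker b)$, so the natural comparison is a monomorphism $\Ker T\rightarrowtail\Coker\widehat{g}$, and a kernel in the opposite direction would embed the larger object into the smaller one. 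The point is that $\Lambda$ does not factor through all of $\Coker(\eta\xi)$ but only through the subobject $\Coker\gamma=\Img S_2S_3$, where $\gamma=\varkappa\xi\colon\Ker b\to P$ and $P$ is the pullback of $\im g$ along $\ker c$. The paper's route is to split $\rho$ as $\tau\varepsilon_0$ with $\varepsilon_0\colon\Coker\xi\to\Coker\gamma$ and $\tau\colon\Coker\gamma\rightarrowtail\Coker(\eta\xi)$ a kernel (this uses the Composition Lemma for $x\gamma$, the morphism $x$ being a kernel with null kernel), which gives $\ker\rho=\ker\varepsilon_0$; the exactness of $g$ then enters through Lambek's Isomorphism Theorem applied to the stacked squares $S_2S_3$ over $T$ (rows $(\ker b,b)$ and $(\ker c,c)$ exact, middle morphism $g$ exact), producing an isomorphism $\Lambda_{S_2S_3,T}\colon\Img S_2S_3\to\Ker T$ with $\Lambda=\Lambda_{S_2S_3,T}\,\varepsilon_0$ modulo the identifications \eqref{identif} --- not through a direct comparison of $\Coim g$ with $\Img g$ in the pushout defining $\Ker T$. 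Since you yourself defer the construction of $\zeta'$ as ``the main obstacle'' without carrying it out, and the object it is supposed to map out of is the wrong one, part (ii) (and hence (ii$'$)) is not established.
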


\begin{proof}
We begin with some general considerations. 

Let
$$
\begin{tikzcd}
P \arrow[d, "y"', tail] \arrow[r, "x", tail]            & \Ker C \arrow[d, "{\mathrm{ker\,}c}", tail] \\
{\mathrm{Im\,}g} \arrow[r, tail] \arrow[r, "{\mathrm{im\,}g}"'] & C                                                     
\end{tikzcd}
$$
be a~pullback. Then there is a~unique morphism $\varkappa:\mathrm{Ker\,}\widecheck{b} \to P$ such that $\eta=\varkappa x$ and
$t\ker \widecheck{b}=\varkappa y$; there is also a~unique morphism $\gamma:\mathrm{Ker\,} b \to P$ such that
$x\gamma = x\varkappa \xi$ (and thus $\gamma = \varkappa \xi$) and 
$y\gamma= \nu(\coker f)\ker b= \overline{g}(\coim g)\ker b$. We obtain the~commutative diagram
$$
\begin{tikzcd}[
    row sep=large,
    column sep=large
    ]
& \Ker b \ar["\ker b"',d,tail] \ar["\xi"',r]  \ar[rr,bend left=20,"\gamma"] 
& \Ker \widecheck{b} \ar[rr,bend left=20,"\eta",crossing over] \ar[r,"\kappa"',dashed] \ar[d,tail,"\ker \widecheck{b}"',crossing over] & P \ar[r, phantom, shift right=5ex, "{\rm PB}" marking] \ar["x",r,tail] \ar["y"',d,tail] & \Ker c \ar[d,tail,"\ker c"] \\
A \ar[r,"f"] \ar[d,"a"'] & B \ar[d,"b"'] \ar[r,"\coker f"',two heads] & \Coker f \ar[d,"\widecheck{b}"] \ar[r,"\nu"',dashed] & \Img g \ar[r,"\im g"',tail] & C  \\
A' \ar[r,"f'"'] & B' \ar[r,"\coker f'"',two heads] & \Coker f'
\end{tikzcd}
$$
We have $\gamma=\varkappa \xi$ and $\eta\xi = x\gamma$. It is not hard to see that the~canonical morphism
$\rho:\Coker \xi\to \Coker(\eta\xi)$ in~\eqref{seq-etaxi} is the~composition of 
the~two canonical morphisms
$$
\Coker \xi \overset{\varepsilon}{\to} \Coker \gamma= \Img S_2 S_3 
\overset{\tau}{\rightarrowtail} \Coker (\eta\xi). 
$$
Here the~morphism~$\tau$ is a~kernel since $x$ is exact, $\ker x$ is null, and so the~sequence
$$
\Ker x \xrightarrow{(\coker \gamma)\ker\varkappa} 
\Coker \gamma \overset{\tau}{\rightarrowtail} \Coker(\eta\xi)
$$
is exact at~$\Coker \gamma$ by the~Composition Lemma applied to the~composition $x\alpha$.

(i) Suppose that $f$ is exact. Then sequence~\eqref{null-bf} is exact at~$\Ker b$ and
the~morphism $k:\Ker(bf)\to \Ker b$ is exact and so we see that 
$k_0=\coim k$ is a~cokernel. Thus, since sequence~\eqref{seq-etaxi} is exact 
at~$\Ker(\eta\xi)$, we have $\coker (\mu k_0)=\coker \mu = \coim v$,
the~morphism $\alpha=(\im v) \overline{v}:\widehat{H} = \Coker(\mu k_0) \to \Ker \eta= \Ker h $ 
is $\mathcal{N}$-monic. 
 
If in~addition $(\coker f)\ker b$ is exact then $\xi$ is exact, by~the~Composition Lemma, \eqref{seq-etaxi} is exact
at~$\Ker \eta$, $v$ is exact and then $\alpha=\ker\beta$ and sequence~\eqref{diagram_nomura_exact_seq}
is exact at~$\Ker(H\to H')$.

(ii) Suppose that $\eta$ and $g$ are exact. Then sequence~\eqref{seq-etaxi} is exact at~$\Coker \xi$
and $\rho$ is exact, that is, $\im [(\coker \xi)\ker\eta] = \ker\rho = \ker\varepsilon$, 
and $\varepsilon$ is exact. Since $g$ is exact, we have the~Lambek isomorphism 
$\Lambda_{S_2 S_3,T}:\Img S_2 S_3 \xrightarrow{\cong} \Ker T$, by which
$$
\Lambda=\Lambda_{ST}= \Lambda_{S_2 S_3,T} \,\varepsilon\, \Lambda_{S S_1}^{-1} \Lambda_{S_2 S_1}.  
$$
Thus, $\Img \beta = \Ker \Lambda$ and $\Lambda$ is exact.
\end{proof}

\smallskip

The~following theorem is a~version of Corollary~A2 to Theorem~A in~\cite{No1}. However, we prove it directly because its hypotheses in a~homological category are weaker than those given by Theorem~\ref{th_exact_1}.

\begin{theorem}\label{corol1}
Suppose that the~row $A'\to B' \to C'$
is exact and the~morphism $b$ is a~kernel in \eqref{maind}.
Then the~sequence
$$
\begin{tikzcd}
\bullet \ar[r,"\mathrm{null}"] & H(A \to B \to C) \ar[r,"\beta",tail] & \Img S \ar[r,"\Lambda", two heads] & \Ker T  \ar[r,"\mathrm{null}"] & \bullet
\end{tikzcd}
$$
is exact, that is, $\beta = \ker \Lambda$ and $\Lambda=\coker \beta$.
Dually, if the~row $A\to B \to C$
is exact and $b$ is a~cokernel in \eqref{maind}
then we have the~sequence 
$$
\begin{tikzcd}
\bullet \ar[r,"\mathrm{null}"] &  \Img S \ar[r,"\Lambda",tail] & \Ker T 
\ar[r,"\beta' ", two heads] & H(A' \to B' \to C')  \ar[r,"\mathrm{null}"] & \bullet
\end{tikzcd}
$$
is exact, that is, 
$\Lambda = \ker \beta'$ and $\beta' = \coker \Lambda$.
\end{theorem}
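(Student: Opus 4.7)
My plan is to simplify Nomura's sequence~\eqref{diagram_nomura_exact_seq} under the hypotheses, reinterpret $\beta$ and $\Lambda$ concretely via the identifications used in its construction, and then apply Lemma~\ref{PB} and the Composition Lemma to obtain the kernel/cokernel relations. First, since $b$ is a kernel, $\Ker b$ is a null object; hence $\Ker(bf)=\Ker f$ by Lemma~\ref{lemma_n_mono_ker_null_and_dual}(iii) and $H(\Ker(bf)\to\Ker b\to\Ker c)$ is null, while exactness of the second row gives $H'=0$, whence $h\colon H\to H'$ is null with $\Ker(H\to H')=H$ and $\Coker(H\to H')=0$. Thus~\eqref{diagram_nomura_exact_seq} degenerates to the claimed five-term sequence, and only $\beta=\ker\Lambda$ and $\Lambda=\coker\beta$ remain to be proved.

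To rewrite $\beta$ and $\Lambda$ concretely, I note that $\xi$ is null (its source is null), so $\Coker\xi=\Ker\widecheck{b}$; the chain~\eqref{identif} then yields $\Img S\cong\Ker\widecheck{b}$ with $\beta=\ker\eta$, both Lambek isomorphisms being available by Theorem~\ref{lam_is} because the columns of the compound squares $SS_1$ and $S_2S_1$ are automatic kernel/cokernel sequences and the middle morphisms ($b$ and $\coker f$) are exact. Analogously, $b$ being a kernel forces $\coim b=\mathrm{id}_B$, so the two pushouts in the proof of Theorem~\ref{lambek_ex_th} degenerate and give $V=\Coker f$, $W=\Coim g$, $\rho_0=\widecheck{b}$, $\zeta=p$ (where $p\coker f=\coim g$) and $\rho=c(\im g)\overline{g}$. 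Consequently $\Lambda$ identifies with the induced morphism of kernels $\widehat{p}\colon\Ker\widecheck{b}\to\Ker\rho=\Ker T$ of the commutative square
\[
\begin{tikzcd}
\Coker f \ar[r,"p"] \ar[d,"\widecheck{b}"'] & \Coim g \ar[d,"\rho"]\\
\Coker f' \ar[r,"\widetilde{g'}"'] & C'
\end{tikzcd}
\]
where $\widetilde{g'}\colon\Coker f'\to C'$ is determined by $\widetilde{g'}\coker f'=g'$.

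The hardest step will be to show $\widehat{p}$ is a cokernel with $\ker\widehat{p}=\ker\eta$. The crux is that bottom-row exactness forces $\widetilde{g'}$ to be $\mathcal{N}$-monic: Lemma~\ref{PB} applied to the commutative square whose rows are $g'\colon B'\to C'$ and $\widetilde{g'}$, with verticals $\coker f'$ and $\mathrm{id}_{C'}$, produces a pullback on kernels; the homological-category fact that pullbacks of cokernel--kernel pairs have the form $\cdot\leftarrowtail\cdot\twoheadrightarrow\cdot$ makes the induced arrow $\Ker g'\to\Ker\widetilde{g'}$ a cokernel, but $\ker g'=\im f'$ (by row exactness) forces $\coker f'\cdot\ker g'\in\mathcal{N}$, so this cokernel is null and $\Ker\widetilde{g'}$ is a null object. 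With $\widetilde{g'}$ now $\mathcal{N}$-monic, Lemma~\ref{PB} applied to the above square (with $\widetilde{g'}$ as the right vertical) makes the corresponding kernel square a pullback; since $p$ is a cokernel by Lemma~\ref{lemma_kop_weg}(ii) (as $p\coker f=\coim g$), the same homological-category property yields that $\widehat{p}$ is a cokernel. Finally, the Composition Lemma applied to $\rho=c\cdot(\im g)\overline{g}$ produces a null sequence exact at $\Ker\rho$ whose left term $\Ker((\im g)\overline{g})$ is null (since $\overline{g}$ is an $\mathcal{N}$-bimorphism by Corollary~\ref{corr_nbimorph_in_the_middle} and $\im g$ is a kernel), so $\psi_0\colon\Ker\rho\to\Ker c$ is $\mathcal{N}$-monic; the relation $\widetilde{g}=(\im g)\overline{g}\,p$ gives $\eta=\psi_0\widehat{p}$, and Lemma~\ref{lemma_n_mono_ker_null_and_dual}(iii) yields $\Ker\eta=\Ker\widehat{p}$. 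Combining, $\beta=\ker\eta=\ker\widehat{p}=\ker\Lambda$ and $\Lambda=\widehat{p}=\coker(\ker\widehat{p})=\coker\beta$, and the dual statement follows by the symmetric argument.
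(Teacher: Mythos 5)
Your proof is correct, and it rests on the same structural observation as the paper's: because $b$ is a kernel, $\coim b$ is an identity, and the morphism $\widecheck{b}\colon\Coker f\to\Coker f'$ factors through $\Coim g$ via the cokernel $p$ (the paper's $t_0$), with $H(A\to B\to C)=\Ker p$, $\Img S\cong\Ker\widecheck{b}$, and $\Ker T=\Ker\rho$. Where you diverge is in the machinery used to extract exactness. The paper writes $\widecheck{b}=v_0t_0$ with $v_0\colon\Coim g\to\Coim g'=\Coker f'$ (using exactness of the bottom row to identify $\Coim g'$ with $\Coker f'$) and applies the Composition Lemma once: since $t_0$ is a cokernel, the induced sequence $\bullet\to\Ker t_0\to\Ker\widecheck{b}\to\Ker v_0\to\bullet$ is short exact, and that is essentially the whole proof. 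You instead establish that $\widehat{p}$ is a cokernel via Lemma~\ref{PB} together with the ex3 property that pullbacks of cokernel--kernel cospans are kernel--cokernel spans, and you identify $\ker\widehat{p}$ with $\ker\eta=\beta$ by factoring $\eta=\psi_0\widehat{p}$ through the $\mathcal{N}$-monomorphism $\psi_0$. This is longer, but it buys something the paper omits: you explicitly verify that the two maps of the resulting short exact sequence really are the $\beta$ and $\Lambda$ of Nomura's sequence~\eqref{diagram_nomura_exact_seq} (via the degenerations $\Coker\xi=\Ker\widecheck{b}$, $\zeta=p$, $\Lambda=\widehat{\zeta}\Lambda_{SS_1}$), whereas the paper asserts this identification silently. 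Two small remarks: your detour through Lemma~\ref{PB} to show that $\widetilde{g'}$ is $\mathcal{N}$-monic can be shortened, since exactness of the bottom row gives $\Coker f'=\Coim g'$ and $\widetilde{g'}=(\im g')\overline{g'}$, which is $\mathcal{N}$-monic by Corollary~\ref{corr_nbimorph_in_the_middle}; and, strictly speaking, the equality $\beta=\ker\Lambda$ also requires checking that the isomorphism $\Lambda_{S_2S_1}\colon\Img S_2\to\Ker S_1$ occurring in~\eqref{identif} reduces to the identity of $\Ker\widecheck{b}$ under the degeneration --- it does, but neither you nor the paper spells this out.
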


\begin{proof}
Consider the~commutative diagram
$$
\begin{tikzcd}[
    row sep=20pt,
    column sep=25pt]
\ar[r, phantom, shift right=4.5ex, "S" marking]
A \arrow[r, "f"] \arrow[d, "a"'] & B  
\arrow[r, "\coker f",two heads] 
\ar[r, phantom, shift right=4.5ex, "S_1" marking]
\arrow[d, "{\im b=b}"description, tail] \arrow[rr, "\coim g", bend left, two heads] & \Coker f \arrow[d, "\widecheck{b}"'] \arrow[r, "t_0",two heads]                          & \Coim g \arrow[r, "{(\im g)\overline{g}}"] \arrow[ld, "v_0"] & C \arrow[d, "c"] \\
A' \arrow[r]                    & B' \arrow[r,two heads]                                                                                                      & \Coker f'=\Coim g' \arrow[rr, "{(\im g')\overline{g'}}"'] &                                                                                  & C'
\end{tikzcd}
$$
where $t_0:\Coker f \to \Coim g$ is the~unique morphism
such that $t_0 \coker f = \coim g$ and
$v_0:\Coim g\to \Coim g'=\Coker f'$
is the~natural morphism of coimages such that $c(\im g)\overline{g}=
(\im g')\overline{g'}\,v_0$.

We have
$$
\Ker t_0 = H(A\to B \to C),
\quad\quad
\Img S \cong \Ker S_1 = \Ker \widecheck{b},
\quad\quad  
\Ker v_0 = \Ker T.
$$
Since $t_0$ is a~cokernel, the~Composition Lemma applied to the~composition
$\widecheck{b}= v_0 t_0$ gives the~exact sequence
$$
\begin{tikzcd}
\bullet \ar[r,"\mathrm{null}"] &  \Ker t_0 \ar[r,tail] & \Ker \widecheck{b} 
\ar[r, two heads] & \Ker v_0  \ar[r,"\mathrm{null}"] & \bullet \,\,,
\end{tikzcd}
$$
that is,
$$
\begin{tikzcd}
\bullet \ar[r,"\mathrm{null}"] & H(A \to B \to C) \ar[r,"\beta",tail] & \Img S \ar[r,"\Lambda", two heads] & \Ker T  \ar[r,"\mathrm{null}"] & \bullet \,\,.
\end{tikzcd}
$$

The~second assertion of the~theorem follows by duality.
\end{proof}

\medskip

We have the~commutative diagram
\begin{equation}\label{diag-full}
\begin{tikzcd}
\Ker a \ar[d,"\ker\,a"'] \ar[r,"\widehat{f}"] & \Ker b \ar[d,"\ker\,b"'] \ar[r,"\widehat{g}"] & \Ker c \ar[d,"\ker\,c"] 
\\
    A \ar[r,"f"] \ar[d,"a"'] \ar[r, phantom, shift right=4ex, "S" marking] & B \ar[r,"g"] \ar[d,"b"description] \ar[r, phantom, shift right=4ex, "T" marking] & C \ar[d,"c"]\\
    A' \ar[r,"f'"']    \ar[d,"\coker a"'] 
    & B' \ar[r,"g'"'] \ar[d,"\coker b"'] 
    & C' \ar[d,"\coker c"]    \\
   \Coker a \ar[r,"\widecheck{f'}"'] 
   & \Coker b \ar[r,"\widecheck{g'}"'] & \Coker c 
\end{tikzcd}
\end{equation}
in which $\widehat{f}:\Ker a\to \Ker b$ and 
$\widehat{g}:\Ker b\to \Ker c$
(respectively, $\widecheck{f'}:\Coker a\to \Coker b$ and 
$\widecheck{g'}:\Coker b\to \Coker c$) are the~natural morphisms of the~kernels (respectively, of the~cokernels) in the~squares~$S$ and $T$. Let $H(\Ker a\to \Ker b\to \Ker c)$ be the~homology of the~first row in~\eqref{diag-full} at~$\Ker b$ and let $H(\Coker a\to \Coker b\to \Coker c)$ be the~homology of the~last row in~\eqref{diag-full} at~$\Coker b$.

Now, consider the~commutative diagram
\begin{equation}\label{diag-t3}
\begin{tikzcd}[
    column sep=35pt,
    row sep=25pt]
\Ker a \arrow[r, "\lambda"] \arrow[d, "\ker a"', tail] & \Ker \xi 
\ar[r, phantom, shift right=5ex, "S_4" marking] \arrow[r, "\mu",tail]
 \arrow[rr, "{\ker \xi}", bend left]\arrow[d, "w"', tail] & \Ker(\eta\xi) 
\ar[r, phantom, shift right=5ex, "S_5" marking] \arrow[r, "\ker (\eta\xi)", tail] \arrow[d, "{\widehat{\ker\,b}}"description, tail] & \Ker b \arrow[d, "{\ker\,b}", tail] \\
A \arrow[r, "{\overline{f}\coim f}"']                                & {\Img f} \arrow[r, "z"', tail] \arrow[rr, "{\im f}"', bend right]                       & \Ker g \arrow[r, "{\ker\,g}"', tail]                                                                                                                    & {B}                        
\end{tikzcd}
\end{equation}
In~\eqref{diag-t3}, the~morphism $z:\Img f\to \Ker g$ is defined by the~equality $\im f=(\ker\,g)z$; $\widehat{\ker\,b}$
is the~morphism of the~kernels of the~rows of the~square
$$
\begin{tikzcd}
\Ker b \arrow[r, "\eta\xi"] \arrow[d, "{\ker\,b}"', tail] & \Ker c \arrow[d, "{\ker\,c}", tail] \\
B \arrow[r, "g"']                                                    & C     
\end{tikzcd}
$$
and the~morphism $w:\Ker \xi\to \Img f$ is defined as follows.
Since $(\coker f)(\ker b) \ker \xi = (\ker \widecheck{b}) \xi (\ker \xi)$
is null and $\im f=\ker(\coker f)$, there exists a~unique morphism
$w:\Ker \xi\to \Img f$ such that $(\ker b)(\ker \xi)=(\im f)w$. The~commutativity of the~left-hand square in~\eqref{diag-t3} follows from the~fact that
$$
(\im f) w\lambda = (\ker\,b)(\ker\xi)\lambda= (\ker\,b)\widehat{f}
=f(\ker\,a) = (\im f) \overline{f}\,(\coim f)(\ker\,a)
$$
and $\im f$ is an~$\mathcal{N}$-monomorphism. 
Since $\mu$ is exact, the~Composition Lemma gives the~exact sequence
\begin{equation}\label{mu-lambda}
\begin{tikzcd}
\bullet \ar[r,"\mathrm{null}"] &
\Coker \lambda \ar[r,"p"] &
\Coker(\mu\lambda) \ar[r,"q"] &
\Coker \mu \ar[r,"\mathrm{null}"] & \bullet.
\end{tikzcd}
\end{equation}

By~Lemma~\ref{PB}, the~square $S_4 S_5$ in~\eqref{diag-t3} is a~pullback.
Indeed, we have the~commutative diagram
$$
\begin{tikzcd}
\Ker \xi \arrow[r, "\ker \xi", tail] \arrow[d, "w"', tail] & \Ker b \arrow[r, "\xi"] \arrow[d, "\ker b"', tail] & \Ker \widecheck{b} \arrow[d, "\ker\widecheck{b}"] \\
{\Img f} \arrow[r, "{\im f}"', tail]                    & B   \arrow[r, "\coker f"', two heads]                 & \Coker f                           
\end{tikzcd}
$$
We have
$$
H(\Ker a\to \Ker b\to \Ker c) = \Coker\left( \Ker a\overset{\mu\lambda}{\to}\Ker(\eta\xi)\right).
$$
Since the~natural morphism $v:\Ker(\eta\xi)\to \Ker \eta$
satisfies $v\mu\in \mathcal{N}$, we have $v=v'\coker \mu$ for some arrow $v':\Coker \mu\to\Ker \eta$. This gives the~morphism 
$\varkappa=v'q:\Coker(\mu\lambda)\to \Ker \eta$. Since
$\beta\alpha$ is null and $v=\alpha\coker(\mu k_0)$, we get 
$\beta v = \beta v'\coker \mu\in \mathcal{N}$, and since $\coker \mu$ is an~$\mathcal{N}$-epimorphism, this gives $\beta v'\in \mathcal{N}$ and hence $\beta\varkappa\in \mathcal{N}$.

Thus, using sequences~\eqref{mu-lambda} and~\eqref{diagram_nomura_exact_seq}
and duality, we obtain the~null-sequence
\begin{equation}\label{seq-2.0}
\begin{tikzcd}[column sep=small]
    \ar[d, phantom, ""{coordinate, name=Z}] \Img S_0 \ar[r,"p"] & H(\Ker a\to \Ker b\to \Ker c) 
    \ar[r,"\varkappa"] & \Ker\left( H \to H' \right) \ar[r, "\beta"] & \Img S 
    \ar[
        dlll,
        "\Lambda"',
        rounded corners,
        to path={ -- ([xshift=2ex]\tikztostart.east)
        |- (Z) [near end]\tikztonodes
        -| ([xshift=-2ex]\tikztotarget.west) 
        -- (\tikztotarget)}
    ]
    \\
    \Ker T \ar[r,"\beta'"'] & \Coker\left( H \to H' \right) \ar[r,"\varkappa'"'] & H(\Coker a\to \Coker b\to \Coker c ) \ar[r,"p'"']
    & \Ker T_0
\end{tikzcd}
\end{equation}

If $f$ is exact then, by Lambek's theorem, $\Img S_0\cong \mathrm{Ker S}$; dually, if $g'$ is exact then $\Ker T_0\cong \Img T$. Thus, if both $f$ and $g'$ are exact then we can replace~\eqref{seq-2.0} by
the~sequence
$$
\begin{tikzcd}[column sep=small]
    \ar[d, phantom, ""{coordinate, name=Z}] \Ker S \ar[r,"p_1"] & H(\Ker a\to \Ker b\to \Ker c) 
    \ar[r,"\varkappa"] & \Ker\left( H \to H' \right) \ar[r, "\beta"] & \Img S 
    \ar[
        dlll,
        "\Lambda"',
        rounded corners,
        to path={ -- ([xshift=2ex]\tikztostart.east)
        |- (Z) [near end]\tikztonodes
        -| ([xshift=-2ex]\tikztotarget.west) 
        -- (\tikztotarget)}
    ]
    \\
    \Ker T \ar[r,"\beta'"'] & \Coker\left( H \to H' \right) \ar[r,"\varkappa'"'] & H( \Coker a\to \Coker b\to \Coker c) \ar[r,"p_1'"']
    & \Img T
\end{tikzcd}
\eqno{\eqref{seq-2}}
$$

Using~Theorem~\ref{th_exact_1}, we obtain the~following analog of~Nomura's
Theorem~B in~\cite{No1}:

\begin{theorem}\label{exact-seq-2}
Suppose that the~morphisms~$b$, $f$, and $g'$ in~\eqref{maind} are exact. 
The~following exactness properties hold for~\eqref{seq-2}:

(i) $p_1=\ker \varkappa$, $p'_1=\coker \varkappa'$. Moreover, if $(\coker f) \ker b$ is exact then sequence~\eqref{seq-2} is exact at~$\Ker(H\to H')$ and  $\varkappa$ is an~exact morphism; if $(\coker b) \ker g'$ is exact then sequence~\eqref{seq-2} is exact at $\Coker(H\to H')$ and $\varkappa'$ is an~exact morphism.
                   
(ii) If $g$ and $\eta$ are exact then \eqref{seq-2} is exact at~$\Img S$ and $\Lambda$ is an~exact morphism. If $f'$ and $\eta'$ are exact then \eqref{seq-2} is exact at~$\Ker T$
and $\Lambda$ is an~exact morphism.
\end{theorem}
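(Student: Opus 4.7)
The plan is to reduce everything to Theorem~\ref{th_exact_1}, the short exact sequence~\eqref{mu-lambda} obtained from the Composition Lemma applied to $\mu\lambda$, and the Lambek identifications $\Img S_0\cong\Ker S$ and $\Ker T_0\cong\Img T$ (available under the hypotheses that $f$ and $g'$ are exact).

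For the first assertion $p_1=\ker\varkappa$ of part~(i), I would exploit the factorisation $\varkappa=v'q$, where $q=\coker p$ comes from~\eqref{mu-lambda} and $v'$ is the unique arrow with $v=v'\coker\mu$. The Composition Lemma applied to $\eta\xi$ makes sequence~\eqref{seq-etaxi} exact at $\Ker(\eta\xi)$, hence $\mu=\ker v$, so that $\coker\mu=\coim v$ and $v'=(\im v)\overline{v}$. By Corollary~\ref{corr_nbimorph_in_the_middle}, $\overline{v}$ is an $\mathcal{N}$-bimorphism, which makes $v'$ an $\mathcal{N}$-monomorphism. Lemma~\ref{lemma_n_mono_ker_null_and_dual}(iii) then yields $\ker\varkappa=\ker(v'q)=\ker q=p$, and composing with the Lambek isomorphism $\Ker S\cong\Coker\lambda=\Img S_0$ gives $p_1=\ker\varkappa$. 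The dual argument, using the exactness of $g'$, establishes $p'_1=\coker\varkappa'$.

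Under the extra hypothesis that $(\coker f)\ker b$ is exact, which is equivalent to $\xi$ being exact (since $(\coker f)\ker b=(\ker\widecheck{b})\xi$ and $\ker\widecheck{b}$ is a kernel), the Composition Lemma further forces $v$ to be exact. Combined with $f$ exact -- which makes $k_0=\coim k$ a cokernel, so that $\coker(\mu k_0)=\coker\mu$ -- this identifies the morphism $\alpha$ of~\eqref{diagram_nomura_exact_seq} with $v'$, and Theorem~\ref{th_exact_1}(i) then gives $\alpha=\ker\beta$. Since $q$ is an $\mathcal{N}$-epimorphism, $\Img\varkappa=\Img(v'q)=\Img v'=\Img\alpha=\ker\beta$, producing exactness of~\eqref{seq-2} at $\Ker(H\to H')$. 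Exactness of $\varkappa=\alpha q$ itself is automatic: $\coker\varkappa=\coker\alpha$ and $\ker\varkappa=p=\ker q$, so the coimage and image factors coincide with $q$ and $\alpha$, forcing $\overline{\varkappa}$ to be an identity. The statements for $\varkappa'$ follow by duality.

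For part~(ii) there is essentially nothing to prove beyond what Theorem~\ref{th_exact_1}(ii) and~(ii$'$) already give: the arrows $\beta$ and $\Lambda$ (respectively $\Lambda$ and $\beta'$) in~\eqref{seq-2} at $\Img S$ (respectively $\Ker T$) are exactly the arrows of~\eqref{diagram_nomura_exact_seq}, so the exactness assertions transfer verbatim. I expect the main obstacle to be the bookkeeping in part~(i) with the extra hypothesis: tracking how $\xi$ exact, $k_0$ a cokernel, and the Lambek identification of $\Img S_0$ with $\Ker S$ interact, so that the equality $\alpha=\ker\beta$ of Theorem~\ref{th_exact_1}(i) can be read off as $\Img\varkappa=\ker\beta$ for the modified sequence~\eqref{seq-2}.
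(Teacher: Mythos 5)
Your proposal is correct and follows essentially the same route as the paper's proof: reduction to Theorem~\ref{th_exact_1}, the exact sequence~\eqref{mu-lambda}, and the factorization relations $v=\alpha\coker(\mu k_0)=v'\coker\mu$, $\varkappa=v'q$, with part~(ii) read off from Theorem~\ref{th_exact_1}(ii,\,ii$'$). You merely supply some details the paper leaves implicit (that $v'$ is an $\mathcal{N}$-monomorphism so $\ker(v'q)=\ker q=p$, and that $\varkappa=\alpha q$ is exact as a cokernel followed by a kernel), and these details check out.
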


\begin{proof}
(i) The~relation $p_1=\ker \varkappa$ follows from the~exact sequence~\eqref{mu-lambda}. The~relations 
\begin{equation}\label{rels}
v=\alpha \coker(\mu k_0), \quad v=v'\coker \mu, \quad \varkappa= v'q,
\end{equation}
and the~fact that 
$\coker(\mu k_0)$, $\coker \mu$, and $q$ are cokernels implies that 
$$
\im \varkappa =  \im v' = \im v = \im \alpha. 
$$
Suppose that if the~morphism $(\coker f)\ker b$ is exact. Then the~exactness of~\eqref{seq-2} at~$\Ker(H\to H')$ follows from the~exactness of~\eqref{diagram_nomura_exact_seq} at~$\Ker(H\to H')$  by Theorem~\ref{th_exact_1}(i). Also by Theorem~\ref{th_exact_1}(i), $\alpha$ is a~kernel, and relations~\eqref{rels} also imply that $\varkappa$
is exact. The~remaining assertions of item~(i) follow by duality.

Item~(ii) follows directly from Theorem~\ref{th_exact_1}(ii,\,ii$'$)
 \end{proof}
 
\smallskip

The~following theorem is a~version of Corollary~B2 in~\cite{No1}. We give a~direct proof because it needs less assumptions than the~corresponding
corollary to Theorem~\ref{exact-seq-2}.

\begin{theorem}
If, in~diagram~\eqref{maind}, $f$ is exact, $f'$ and $h:H\to H'$ are $\mathcal{N}$-monomorphisms then the~sequence $\Ker a \overset{\widehat{f}}{\to} \Ker b \overset{\widehat{g}}{\to} \Ker c$
is exact, that is,
$\im \widehat{f} = \ker \widehat{g}$. 

Dually, if, in~\eqref{maind}, $g'$ is exact and $g$ and $h:H\to H'$ are $\mathcal{N}$-epimorphisms then the~sequence
$\Coker a \overset{\widecheck{f}}{\to} \Coker b \overset{\widehat{g}}{\to} \Coker c$ is exact.
\end{theorem}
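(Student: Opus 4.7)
The plan is to deduce the desired exactness $\im\widehat{f} = \ker\widehat{g}$ from Nomura's second null-sequence \eqref{seq-2}, by showing that under the hypotheses the homology object $H(\Ker a \to \Ker b \to \Ker c)$ is null. First, I would extract from the proof of Theorem~\ref{exact-seq-2}(i) just the identity $p_1 = \ker \varkappa$, which yields exactness of~\eqref{seq-2} at~$H(\Ker a\to\Ker b\to\Ker c)$. A careful reading shows that this identity uses only: the exact sequence~\eqref{mu-lambda} (coming from the Composition Lemma applied to~$\mu\lambda$ with $\mu$ exact, automatic in a homological category); the fact that $v'$ in~$\varkappa=v'q$ is an $\mathcal{N}$-monomorphism, obtained from the ex2-structure via Corollary~\ref{corr_nbimorph_in_the_middle}, so that $\ker\varkappa = \ker(v'q) = \ker q = p$ by Lemma~\ref{lemma_n_mono_ker_null}(iii); and the Lambek identification $\Coker\lambda \cong \Img S_0 \cong \Ker S$, valid because $f$ is exact. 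The standing hypothesis that $b$ is exact is present but the additional assumption ``$g'$ exact'' from Theorem~\ref{exact-seq-2} is not needed for this single identity.

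Next I would exploit the two $\mathcal{N}$-monic hypotheses. By Lemma~\ref{ep-mn-triv}, $f'$ being an $\mathcal{N}$-monomorphism makes $\Ker S$ a null object. Since $h : H\to H'$ is an $\mathcal{N}$-monomorphism, $\Ker h \in\mathcal{N}$ is null; but the identification $\Ker h = \Ker\eta = \Ker(H\to H')$ was established (via Lemma~\ref{lemma_three_by_three}(i)) in the construction preceding Theorem~\ref{th_exact_1}. Consequently $\varkappa : H(\Ker a\to\Ker b\to\Ker c)\to\Ker(H\to H')$ factors through a null object and is itself null, whence $\ker\varkappa = \id_{H(\Ker a\to\Ker b\to\Ker c)}$.

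Combining these facts, the identity $p_1 = \ker\varkappa$ forces $p_1 : \Ker S \to H(\Ker a\to\Ker b\to\Ker c)$ to be an isomorphism. Since $\Ker S$ is a null object, so is $H(\Ker a\to\Ker b\to\Ker c)$, which is exactly the statement $\im\widehat{f} = \ker\widehat{g}$. The dual assertion is obtained by the symmetric argument applied to the second half of~\eqref{seq-2}: $g'$ exact gives the identification $\Ker T_0 \cong \Img T$ and dually the identity $p_1' = \coker\varkappa'$; the hypothesis ``$g$ an $\mathcal{N}$-epimorphism'' makes $\Img T$ null by Lemma~\ref{ep-mn-triv}, while $h$ an $\mathcal{N}$-epimorphism makes $\Coker(H\to H')$ null; hence $\varkappa'$ is null, $p_1'$ is an isomorphism, and $H(\Coker a\to\Coker b\to\Coker c)$ vanishes.

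The main obstacle is the first step: verifying that the identity $p_1 = \ker\varkappa$ really follows from hypotheses strictly weaker than those in Theorem~\ref{exact-seq-2} --- in particular, without invoking the exactness of~$g'$. This amounts to tracing precisely which ingredients of the construction of $\varkappa$ depend on what, and checking that $\mathcal{N}$-monicity of~$v'$ (rather than exactness of~$v$ itself, which would require $\xi$ or $\eta$ exact via the Composition Lemma) is enough to push $\ker$ past~$v'$.
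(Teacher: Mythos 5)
Your argument is correct, but it takes a genuinely different route from the paper. The paper proves the first assertion by a direct universal-property chase: given $x\colon X\to\Ker b$ with $\widehat{g}x$ null (assumed WLOG a kernel), it lifts $x$ through $\Ker g$ and $\Img f$, uses the $\mathcal{N}$-monicity of $h$ and then of $f'$ to kill the successive obstructions, forms a pullback along $\coim f$, and exhibits $x=(\im\widehat{f})\widetilde{x}$ explicitly; notably, this chase never uses the exactness of $b$ and bypasses the machinery of Section~\ref{nomura_null} entirely, which is exactly why the authors prove the theorem directly rather than as a corollary of Theorem~\ref{exact-seq-2}. You instead derive the statement from that machinery by showing $H(\Ker a\to\Ker b\to\Ker c)$ is a null object: you isolate the identity $p_1=\ker\varkappa$, verify that it needs only the exactness of $f$ (the exactness of $\mu$, the exactness and $\mathcal{N}$-monicity of $p$ from~\eqref{mu-lambda}, and the $\mathcal{N}$-monicity of $v'=(\im v)\overline{v}$ are all automatic, the last via Corollary~\ref{corr_nbimorph_in_the_middle} once one notes $\coim v=\coker\mu$), and then use Lemma~\ref{ep-mn-triv} and the identification $\Ker(H\to H')=\Ker\eta=\Ker h$ to make both ends of $p_1=\ker\varkappa$ null. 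This is a legitimate and rather elegant alternative; its main cost is that you must re-audit the construction of~\eqref{seq-2.0}, and you correctly identify and discharge the one delicate point (pushing $\ker$ past $v'$ without assuming $\xi$ or $\eta$ exact). Two things you should make explicit: first, the step $\ker q=\im p\cong p$ uses that $p$ is an exact $\mathcal{N}$-monomorphism (so that $\overline{p}$ and $\coim p$ are isomorphisms), which the Composition Lemma supplies precisely because $\mu$ is exact; second, since the theorem as stated does not list the section's standing hypothesis that $b$ is exact, and the paper's own proof avoids it, you should note that none of the ingredients you actually use --- $\lambda$, $\mu$, $v'$, $q$, the pullback description of $\Ker\xi$, the Lambek isomorphism $\Img S_0\cong\Ker S$, or $\Ker h\cong\Ker\eta$ --- depends on the exactness of $b$, so your proof matches the paper's in generality.
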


\begin{proof}
Suppose that $x:X\to \Ker b$ is a~morphism such that $\widehat{g}x\in \mathcal{N}$. We will prove that $x=(\im \widehat{f}) \widetilde{x}$ for some unique~$\widetilde{x}$.
Obviously, we may assume without loss of generality that $x$ is a~kernel, and thus $x=\im x$.

Since 
$$
g(\ker b)x=(\ker c)\widehat{g}x\in \mathcal{N},
$$
there exists a~morphism $z:X\to \Ker g$ such that $(\ker b)x=(\ker g)z$. Moreover,
$$
(\ker g')\widehat{b}z=b(\ker g)z= b(\ker b)x \in \mathcal{N},
$$
and since $\ker g$ is an~$\mathcal{N}$-monomorphism, we have $\widehat{b}z\in \mathcal{N}$. Since
$$
h(\coker r)z = (\coker r_1)\widehat{b}z\in \mathcal{N}
$$
and $h$ is an~$\mathcal{N}$-monomorphism, we have $(\coker r)z\in \mathcal{N}$. We have $r=\ker(\coker r)$, and hence there is a~morphism
$\pi:X\to \Img f$ with $z=r\pi$. Since $z$ is a~kernel, so is $\pi$.
We have (see~\eqref{square-hom})
$$
r' \widehat{\widehat{b}}\pi = \widehat{b} r\pi= \widehat{b} z \in \mathcal{N}.
$$
Since $r'$ is an~$\mathcal{N}$-monomorphism, this implies that $\widehat{\widehat{b}}\pi \in\mathcal{N}$.

Since $f$ is exact, $f=(\im f) \coim f$. Consider the~pullback (along the~kernel $\pi$)
$$
\begin{tikzcd}
Y \arrow[r, "y_2"] \arrow[d, "y_1"'] & X \arrow[d, "\pi"]                \\
A \arrow[r, "{\coim f}"']    & {\Img f=\Coim f}
\end{tikzcd}
$$
Observe that $\widehat{\widehat{b}}= b \im f$.
We have
$$
f'a y_1= bf y_1 = b(\im f)(\coim f)y_1 
= b(\im f)\pi y_2 = \widehat{\widehat{b}}\pi y_2 \in \mathcal{N}.
$$
Since $f'$ is a~monomorphism, it follows that $a y_1\in \mathcal{N}$. Hence there exists
a~morphism $y_0:Y\to \Ker a$ with $y_1=(\ker a)y_0$. We infer
$$
(\ker b)x y_2= (\ker g)z  y_1 =
(\ker g)r\pi y_1 = (\im f)\pi y_1 
= (\im f)(\coim f)y_1 = f y_1 =
f(\ker a) y_0 =
(\ker b)\widehat{f} y_0.
$$
Since $\ker b$ is an~$\mathcal{N}$-monomorphism, this gives
$$
x y_2 = \widehat{f} y_0.
$$
Since $x$ is a~kernel and $y_2$ is a~cokernel,
$$
x = \im(x y_2) = (\im \widehat{f})
\im(\widehat{f} y_0).
$$ 
We put 
$$
\widetilde{x} = \im(\widehat{f} y_0).
$$
The~condition $x=(\im \widehat{f}) \widetilde{x}$ defines
$\widetilde{x}$ uniquely because $\im \widehat{f}$ is a~monomorphism. 

The~first assertion of the~theorem is proved.

The~second assertion is proved by duality.
\end{proof}

Let
\begin{equation}\label{5ll}
\begin{tikzcd}
A \arrow[r, "f"] \arrow[d, "a"'] & B \arrow[r, "g"] \arrow[d, "b"'] & C \arrow[r, "h"] \arrow[d, "c"'] & D \arrow[r, "k"] \arrow[d, "d"] & E \arrow[d, "e"] \\
A'  \arrow[r, "f'"']             & B' \arrow[r, "g'"']              & C' \arrow[r, "h'"']              & D' \arrow[r, "k'"']             & E'         \end{tikzcd}
\end{equation}
be a~commutative diagram with null rows. Diagram~\eqref{5ll} induces two zero sequences 
$$
\begin{tikzcd}
\Ker c \arrow[r, "\widehat{h}"] & \Ker d \arrow[r, "\widehat{k}"] & \Ker e
\end{tikzcd}
$$
and
$$
\begin{tikzcd}
\Coker a \arrow[r, "\widecheck{f'}"] & \Coker b \arrow[r, "\widecheck{g'}"] & \Coker c \,.
\end{tikzcd}
$$
This gives two homology objects $H(\Ker c\to \Ker d \to \Ker e)$ and $H(\Coker a\to \Coker b \to \Coker c)$.

\medskip
We have the~following theorem, which is a~generalized version of the~five lemma in a~homological category whose Puppe exact counterpart is Theorem~7 in \cite{No1}.

\begin{theorem}\label{5l}
If the~upper row in~\eqref{5ll} is exact at~$C$ and~$D$, the~lower row is exact at~$B'$ and $C'$, and the~morphisms $h$, $c$, and $g'$ are exact then 
$$
H(\Ker c\to \Ker d \to \Ker e) \cong H(\Coker a\to \Coker b \to \Coker c) \,.
$$
\end{theorem}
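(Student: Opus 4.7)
The plan is to use the middle pair of squares in diagram~\eqref{5ll} as a bridge between the two homologies. Denote by $S_{ij}$ the square formed by the $i$-th and $j$-th columns of~\eqref{5ll}, so that the diagram consists of four consecutive squares $S_{12}$, $S_{23}$, $S_{34}$, $S_{45}$. Lambek's Isomorphism Theorem (Theorem~\ref{lam_is}) applies to the pair $(S_{23}, S_{34})$: the middle morphism~$c$ is exact by assumption, the upper row is exact at its middle object~$C$, and the lower row is exact at~$C'$. This yields a canonical isomorphism
\[
\Lambda:\Img S_{23}\xrightarrow{\ \cong\ }\Ker S_{34}.
\]

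It then suffices to produce natural isomorphisms $\Img S_{23}\cong H(\Coker a\to \Coker b\to \Coker c)$ and $\Ker S_{34}\cong H(\Ker c\to \Ker d\to \Ker e)$, whose composition with~$\Lambda$ gives the result. For the latter, I would work with the pushout construction of $\Ker S_{34}$ from~\eqref{def_inv} and exploit the top-row exactness at~$D$: since $\im h=\ker k$ and $kh\in\mathcal N$, together with the exactness of~$h$, elements of $\Ker\widehat k\subseteq \Ker d$ can be lifted back through~$h$ to a subobject of~$C$ contained in $\Ker(dh)$. The natural morphism $\Ker(dh)\to \Ker S_{34}$ coming from the pushout, combined with the induced morphism $\Ker(dh)\to \Ker \widehat k$, then descends to the desired isomorphism. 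The Composition Lemma (Lemma~\ref{composition_lemma}), applied to~$kh$, and the $3\times 3$ Lemma (Lemma~\ref{lemma_three_by_three}), used to rearrange the induced kernels, are the principal tools. The identification $\Img S_{23}\cong H(\Coker a\to \Coker b\to \Coker c)$ is dual, using the exactness of the lower row at~$B'$ and the exactness of~$g'$. Composing the three isomorphisms yields the theorem.

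The main obstacle is the two outer identifications. In the abelian or Puppe exact setting they reduce to routine element-level arguments: one writes $y=h(x)$ using $y\in\ker k=\im h$, then follows $c(x)\in\ker h'=\im g'$ to $B'$, obtaining a class in $\Coker b$. However, in a Grandis homological category one cannot directly invoke Nomura's sequence~\eqref{seq-2} for the outer pairs $(S_{12}, S_{23})$ or $(S_{34}, S_{45})$, since neither of the middle morphisms~$b$ or~$d$ is assumed exact. The identifications must therefore be established directly via the universal properties of pushouts, pullbacks, kernels, and cokernels, with systematic use of the Composition Lemma and the $3\times 3$ Lemma, supported by the exactness of~$h$, $c$, and~$g'$ and by the row exactness at~$C$, $D$, $B'$, and~$C'$.
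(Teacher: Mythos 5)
Your skeleton coincides with the paper's: the bridge is the Lambek isomorphism $\Img S_{23}\cong\Ker S_{34}$ for the middle pair of squares (exact middle morphism $c$, rows exact at $C$ and $C'$), and what remains is to identify $\Ker S_{34}$ with $H(\Ker c\to\Ker d\to\Ker e)$ and $\Img S_{23}$ with $H(\Coker a\to\Coker b\to\Coker c)$. But those two identifications are the actual content of the theorem, and you do not prove them: you give an element-level heuristic (lifting $\Ker\widehat{k}$ back through $h$ into $\Ker(dh)$) and then assert that the categorical version ``must be established directly via the universal properties \dots with systematic use of the Composition Lemma and the $3\times 3$ Lemma.'' That is a statement that a proof should exist, not a proof, and it is precisely the step you yourself flag as the main obstacle.

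The idea you are missing is that the horizontal machinery applies equally well to \emph{vertically} stacked squares, where the shared middle morphisms are exact for free. Write $S_{\rm I}$ for the square with rows $\widehat{h}\colon\Ker c\to\Ker d$ and $h\colon C\to D$ (columns $\ker c$, $\ker d$), $S_{\rm II}$ for its right neighbour, and dually $S_{\rm V},S_{\rm VI}$ for the squares between the rows $A'\to B'\to C'$ and $\Coker a\to\Coker b\to\Coker c$. First, Theorem~\ref{lam_is} applied to the vertical pair sharing the exact morphism $h$ (whose ``rows'' are the exact columns $\Ker c\to C\to C'$ and $\Ker d\to D\to D'$) gives $\Img S_{\rm I}\cong\Ker S_{34}$; dually, the vertical pair sharing the exact $g'$ gives $\Img S_{23}\cong\Ker S_{\rm VI}$. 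Second, Theorem~\ref{corol1} — whose hypotheses are exactly tailored to this situation, since $\ker d$ is a kernel and the row $C\to D\to E$ is exact at $D$ — yields the exact sequence $\bullet\to H(\Ker c\to\Ker d\to\Ker e)\to\Img S_{\rm I}\to\Ker S_{\rm II}\to\bullet$, and $\Ker S_{\rm II}$ is null by Lemma~\ref{ep-mn-triv} because $\ker e$ is an $\mathcal{N}$-monomorphism; hence $H(\Ker c\to\Ker d\to\Ker e)\cong\Img S_{\rm I}$, and dually $H(\Coker a\to\Coker b\to\Coker c)\cong\Ker S_{\rm VI}$ using that $\coker b$ is a cokernel, the row $A'\to B'\to C'$ is exact at $B'$, and $\Img S_{\rm V}$ is null. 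Your correct observation that Nomura's sequence~\eqref{seq-2} cannot be invoked for the horizontal outer pairs (since $b$ and $d$ need not be exact) should have pointed you toward transposing the diagram rather than toward an ad hoc direct construction; as written, the proposal leaves the decisive steps unestablished.
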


\begin{proof}
We have the~commutative diagram
$$
\begin{tikzcd}[row sep=25pt]
&                                                       & \Ker c
\ar[r, phantom, shift right=4.5ex, "S_{\rm I}" marking]
\arrow[d, "{\ker\,c}"', tail] \arrow[r, "\widehat{h}"] & \Ker d \arrow[d, "{\ker\,d}"description,tail] 
\ar[r, phantom, shift right=4.5ex, "S_{\rm II}" marking]
\arrow[r, "\widehat{k}"] & \Ker e \arrow[d, "{\ker\,e}", tail] \\
A \arrow[r, "f"] \arrow[d, "a"']                      & B \arrow[r, "g"] 
\ar[r, phantom, shift right=4.5ex, "S_{\rm III}" marking]
\arrow[d, "b"']                      & C 
\ar[r, phantom, shift right=4.5ex, "S_{\rm IV}" marking]
\arrow[r, "h"description] \arrow[d, "c"description]                                   & D \arrow[r, "k"'] \arrow[d, "d"]                                   & E \arrow[d, "e"]                         \\
A' 
\ar[r, phantom, shift right=4.5ex, "S_{\rm V}" marking]
\arrow[r, "f'"] \arrow[d, "\coker a"', two heads] & B' \arrow[r, "g'"description]
\ar[r, phantom, shift right=4.5ex, "S_{\rm VI}" marking]
\arrow[d, "\coker b"description,two heads] & C' \arrow[r, "h'"'] \arrow[d, "\coker c", two heads]               & D' \arrow[r, "k'"']                                               & E'                                       \\
\Coker a \arrow[r, "\widecheck{f'}"']           & \Coker b \arrow[r, "\widecheck{g'}"']           & \Coker c                                                &                                                                   &                                         
\end{tikzcd}
$$
Theorem~\ref{lam_is} about the~Lambek isomorphism gives
\begin{equation}\label{isom16}
\Img S_{\rm I} \cong \Ker S_{\rm IV} \cong \Ker S_{\rm III}
\cong \Ker S_{\rm VI}.
\end{equation}
By Theorem~\ref{corol1}, we have the~exact sequences with exact morphisms
$$
\begin{tikzcd}
\bullet \ar[r,"\mathrm{null}"] & H(\Ker c\to \Ker d \to \Ker e) \ar[r,"\beta",tail] & \Img S_{\rm I} \ar[r,"\Lambda_{S_{\rm I} S_{\rm II}}", two heads] & \Ker S_{\rm II}  \ar[r,"\mathrm{null}"] & \bullet
\end{tikzcd}
$$
and
$$
\begin{tikzcd}
\bullet \ar[r,"\mathrm{null}"] &  \Img S_{\rm V} \ar[r,"\Lambda_{S_{\rm V} S_{\rm VI}}",tail] & \Ker S_{\rm VI}
\ar[r,"\beta' ", two heads] & H(\Coker a\to \Coker b \to \Coker c)  \ar[r,"\mathrm{null}"] & \bullet
\end{tikzcd}
$$
Since $\ker\,e$ is an~$\mathcal{N}$-monomorphism, by~Lemma~\ref{ep-mn-triv}, $\Ker S_{\rm II}$ is null, and so $\Lambda_{S_{\rm I} S_{\rm II}}$ is null. Therefore,  
$H(\Ker c \to \Ker d \to \Ker e) \cong \Img S_{\rm I}$. Moreover, since $\coker a$ is an~$\mathcal{N}$-epimorphism, by Lemma~\ref{ep-mn-triv}, $\Img S_{\rm V}$ is null, and so $\Lambda_{S_{\rm V} S_{\rm VI}}$ is null. Hence, 
$H(\Coker a\to \Coker b \to \Coker c) \cong \Ker S_{\rm VI}$. Reckoning with~\eqref{isom16}, we obtain the~desired
isomorphism
$$
H(\Ker c\to \Ker d \to \Ker e) \cong H(\Coker a\to \Coker b \to \Coker c) \,.
$$
\end{proof}

\section*{Ackowledgment}

The work of Ya.~Kopylov was carried out in the framework of the State Task to the Sobolev
Institute of Mathematics (Project FWNF--2026--0026).

\section*{Declarations}

$\bullet$ \textbf{Conflict of interest/Competing interests:} The authors declare that
there is no conflict of interest.


$\bullet$ \textbf{Availability of data and materials:} Data sharing not applicable to this
article as no datasets were generated or analyzed during the current study.

$\bullet$ \textbf{Authors’ contributions:} The authors contributed equally to this work.


\begin{thebibliography}{99}

\bibitem{ConnCons2019}
A.~Connes and C.~Consani,
``Homological algebra in~characteristic one,''
High. Struct. {\bf 3}(1) (2019), 155--247.

\bibitem{Fr2023}
T.~Fritz,
``Non-abelian and $\varepsilon$-curved homological algebra with arrow categories,''
J. Pure Appl. Algebra {\bf 227}(6) (2023), Article ID 107314, 25~p.

\bibitem{Gr92}
M.~Grandis, 
``On the categorical foundations of homological and homotopical algebra,''
Cah. Topologie G\'eom. Diff\'er. Cat'eg. {\bf 33}(2) (1992)), 135--17.

\bibitem{MGrandis-2013}
M.~Grandis,
\emph{Homological Algebra in~Strongly Non-Abelian Settings}, World Scientific Publishing, 2013.

\bibitem{Ko05_2}
Ya.~A.~Kopylov,
{\it On the Lambek invariants $\Ker$ and $\Img$ of commutative
squares in~a~quasi-abelian category}, Sci. Ser. A Math. Sci. (N.S.) {\bf 11} (2005),
57--67.

\bibitem{Ko_semr12}
Ya.~A.~Kopylov,
On the homology sequence in a P-semi-abelian category. 
Sib. \`{E}lektron. Mat. Izv. {\bf 9}, 190--200 (2012).

\bibitem{KoLe2024}
Ya.~A.~Kopylov, V.~Leshkov,
{\it Lambek Invariants of Commutative Squares in a Homological Category},
J. Math. Sci.
\textbf{279} (2024),
455--467.

\bibitem{KoWe2012}
Ya.~A.~Kopylov, S.-A.~Wegner,
{\it On the notion of a semi-abelian category in the sense of Palamodov},
Appl. Categor. Struct. {\bf 20} (2012), 531--541.

\bibitem{Lambek-1964}
J.~Lambek,
{\it Goursat's theorem and homological algebra}, Can. Math. Bull.
{\bf 7} (1964), 597--608.

\bibitem{Le}
J.~B.~Leicht,
{\it Axiomatic proof of J. Lambek's homological theorem},
Can. Math. Bull. \textbf{7} (1964), 609--613.

\bibitem{No1}
Y.~Nomura,
{\it An exact sequence generalizing a theorem of Lambek},
Arch. Math. \textbf{22} (1971), 467--478.

\end{thebibliography}
\end{document}